\documentclass[hidelinks,onefignum,onetabnum]{siamart220329}




\usepackage{amsfonts}
\usepackage{amssymb}
\usepackage{pdfpages}
\floatstyle{plain}
\restylefloat{figure}
\usepackage{cleveref}
\usepackage[normalem]{ulem}

\usepackage{color}

\newcommand{\OUT}[1]{}

\usepackage{bbm}
\newcommand{\ONE}{\mathbbm{1}}

\newcommand{\pr}{\mbox{\sf P}}
\newcommand{\ex}{\mathbb{E}}               

\newcommand{\cov}{\mbox{\sf Cov}}

\newcommand{\lip}{\mbox{\sf Lip}}

\newcommand{\Real}{\mathbb R}
\newcommand{\Natural}{\mathbb N}
\newcommand{\Complex}{\mathbb C}


\newcommand{\bZ}{{\mathbb Z}}
\newcommand{\bX}{{\bf X}}               





\newcommand{\mcmcX}{{X}}

\newcommand{\al}{\alpha}                
\newcommand{\Lam}{\Lambda}               

\newcommand{\ra}{\rightarrow}           

\usepackage[linesnumbered,ruled,algo2e]{algorithm2e}



\newtheorem{remark}{Remark}

\title{Regenerative Ulam-von Neumann Algorithm: An Innovative Markov chain Monte Carlo Method for Matrix Inversion}
\ifpdf
\hypersetup{
  pdftitle={Regenerative Ulam-von Neumann Algorithm: A Markov chain Monte Carlo Method for Matrix Inversion},
  pdfauthor={S. Ghosh, L. Horesh, V. Kalantzis, Y. Lu, T. Nowicki}
}
\fi

\author{Soumyadip Ghosh$^*$, Lior Horesh$^*$,  Vassilis Kalantzis$^*$, Yingdong Lu$^*$, and Tomasz  Nowicki\thanks{IBM Research, Thomas J. Watson Research Center, Yorktown Heights, NY 10598. 
  \email{\{ghoshs,lhoresh,vkal,yingdong,tnowicki\}@us.ibm.com}.}}




\begin{document}

\maketitle

\begin{abstract}
This paper presents a regenerative variant of the classical Ulam-von Neumann Markov chain 
Monte Carlo algorithm for the approximation of the matrix inverse. The algorithm presented 
in this paper, termed \emph{regenerative Ulam-von Neumann algorithm}, utilizes the 
regenerative structure of classical, non-truncated Neumann series defined by a non-singular 
matrix and produces an estimator of the matrix inverse via ratios of 
unbiased estimators of the regenerative quantities. The accuracy of the proposed algorithm depends on a single 
parameter that controls the total number of simulated Markov transitions, thus avoiding the challenge of balancing between the total number of Markov chain replications and their length as in the classical Ulam-von Neumann algorithm. To efficiently utilize Markov chain transition samples in the calculation of the regenerative variables, the proposed algorithm automatically quantifies the contribution of each Markov transition to all regenerative quantities by a carefully designed updating scheme that utilized three separate matrices containing the current weights, total weights, and regenerative cycle count, respectively. A probabilistic analysis of the performance of the algorithm, including the variance of the estimator, is provided. Finally, numerical experiments verify the effectiveness of the proposed scheme.
\end{abstract}

\begin{keywords}
Matrix inversion, Markov chain Monte Carlo, regenerative Markov chains.
\end{keywords}

\begin{MSCcodes}
68Q25, 68R10, 65C05
\end{MSCcodes}

\section{Introduction}
\label{sec:intro}

Markov chain Monte Carlo (MCMC) is an important class of algorithms that draw sample paths 
from a Markov chain \cite{brooks1998markov}. One well-known application of MCMC is the numerical solution of linear systems, also known as the Ulam-von Neumann algorithm due to its original application in thermonuclear dynamics computations by Stanis\l{}aw Ulam and 
John von Neumann~\cite{ForsytheLeibler1950}. The main idea behind the Ulam-von Neumann 
algorithm lies in constructing and sampling a discrete Markov chain with a state space 
defined by the rows of the iteration matrix and a carefully designed transition probability matrix \cite{benzi2017analysis,doi:10.1137/130904867}. Monte Carlo-type methods can be 
appealing on a variety of applications due to their high parallel granularity and  ability 
to compute partial solutions of systems of linear equations \cite{wu2019multiway}.

In this paper we consider the related problem of computing an approximation of the 
inverse of a non-singular matrix via MCMC  \cite{strassburg2013monte,DIMOV200125,meyer1975role,heyman1995accurate,dimov1998new,prabhu2013approximative}. A straightforward implementation of the Ulam-von Neumann 
algorithm samples an associated Markov chain to estimate a truncation of the Neumann series $(I-A)^{-1}=\sum_{n=0}^\infty A^n$. More specifically, let $P$ be the transition matrix for the Markov chain defined over the row indices of the matrix $A$. Then, the $(i,j)$-th element of the $n$-th power of $P$ represents the probability that the Markov chain transits from the state $i$ to state $j$ after $n$ steps. This probability can be estimated by Monte Carlo simulations of the Markov chain, and thus the inverse of a matrix expressed via a converging  Neumann series in~\cite{ALEXANDROV1998113,doi:10.1080/10637199408962545,doi:10.1137/130904867,DIMOV200125}. The accuracy of classical MCMC algorithms depends on the total number of walks of the Markov chain as well as the truncation threshold (length) of each independent walk, and their interaction, e.g., 
see~\cite{9652800}. 

The algorithm presented in this paper, termed as \emph{regenerative Ulam-von Neumann algorithm}, utilizes the regenerative structure of the Neumann series and produces unbiased estimators of the key regenerative weights, i.e., we estimate the non-truncated Neumann series. A probabilistic analysis of the regenerative algorithm is also conducted where the second moment of the samples is quantified through two independent central limit theorems; one for Markov chains on symmetric groups and one for the summation of weakly dependent random variables. 

A list of the main contributions of this paper is summarized as follows: 
\begin{itemize}
\item 
{\bf Algorithm:} Upon observing a regenerative structure in the classical Ulam-von Neumann algorithm, we identify a stochastic relationship~\eqref{eqn:gen_defn} that is induced by the regenerative structure. This observation allows us to introduce an alternative estimator in the form of regenerative quantities which are also solutions to the stochastic fixed-point equation. This new estimator, listed as Algorithm~\ref{alg:ruvn}, consists of unbiased estimators of key quantities in the matrix inversion, in contrast to the classic Ulam-von Neumann estimator that suffers from the bias induced by truncating the Neumann series. 
\item 
{\bf Practicality:}
The accuracy of the proposed algorithm depends only on a single parameter --the total number of Markov transitions simulated-- thus avoiding the challenge of balancing between two different parameters as in the classical Ulam-von Neumann matrix inversion algorithm. To efficiently utilize the Markov chain transition samples in the calculation of the regenerative quantities, the proposed algorithm (Algorithm~\ref{alg:ruvn}) quantifies the contribution of each Markov transition to all regenerative quantities via exploiting three separate matrices that maintain quantities needed to calculate the current sample of $A_{i,j}$, the cumulative sum of all $A_{i,j}$ samples observed, and the count of regenerative cycles, respectively. An immediate benefit of the regenerative approach is the update of all entries of the $j$-th column of the matrix inverse simultaneously per each drawn sample $A_{i,j}$. Thus, if one is interested in computing only one or a few columns of the matrix inverse, e.g., computation of partial correlations in data uncertainty 
quantification \cite{kalantzis2018scalable,kenett2015partial}, the proposed algorithm can reduce the overall computational cost when the matrix $A$ is implicitly-defined and sampling $A_{i,j}$ is computationally intensive.
\item
{\bf Analysis:} To quantify the variance of the estimator, a central limit theorem (CLT) described in Theorem~\ref{thm:main_CLT} is established. There are two key components in proving Theorem~\ref{thm:main_CLT}. First, we present a connection between the evolution of the walk counting matrix and a Markov chain on symmetric groups, and obtain a CLT in Theorem~\ref{thm:Gamma_CLT}. Second, we obtain a CLT in Theorem~\ref{thm:Sigma_CLT} for the cumulative sum matrix, as a direct result of a general Lindberg CLT for weakly dependent random vectors (Theorem~\ref{thm:main_clt}) and whose proof follows basic approaches in the literature but offers more precise estimations. 
\end{itemize}

\textbf{Related work.} Approximate inverse computations are often encountered in matrix computations, e.g., preconditioning by (sparse) approximate inverse matrices \cite{benzi1999comparative}. Common deterministic techniques to approximate the matrix inverse rely either on applying Newton's method to the function $f(X)=X^{-1}-A$ (Newton-Schulz method \cite{higham2008functions}; also referred to as Hotelling's method \cite{hotelling1943some}) or 
optimization, e.g., by minimizing $\|XA-I\|_F$ \cite{benzi1996sparse}. In this paper we focus on MCMC  approaches to compute an approximate inverse. MCMC methods offer a probabilistic alternative to classical matrix inversion, with roots in the Neumann series and random walk theory \cite{ForsytheLeibler1950,malioutov2006walk}. 
The work in \cite{gower2017randomized} presents a family of stochastic/randomized algorithms for calculating an approximate inverse matrix, with block variants of quasi-Newton updates as a special case; see also \cite{gower2015randomized} for similar work from the viewpoint of a randomized linear system solver, and \cite{wu2019multiway,okten2005solving,magalhaes2022distributed,strassburg2013monte,benzi2002parallel,benzi2017analysis} for various Monte Carlo algorithms suggested for the solution of linear systems. An analysis of the convergence of Monte Carlo algorithms from the viewpoint of the transition probability matrix can be found in \cite{doi:10.1137/130904867}. 
Several MCMC algorithms for the problem of matrix inversion are compared in \cite{alexandrov1996comparison} while a reinforcement learning approach is presented in \cite{barto1993monte}. 
Finally, MCMC algorithms have been developed for graph centrality computations of large-scale graphs \cite{guidotti2024fast,magalhaes2022distributed}.

\textbf{Organization.} Section~\ref{sec:bac} summarizes related previous work, and in 
particular the Ulam-von Neumann algorithm for matrix inversion. Section~\ref{sec:uvn} 
presents our main algorithmic contribution that exploits the regenerative structure of 
Ulam-von Neumann. In Section~\ref{sec:analysis}, we present a probabilistic analysis of the 
proposed algorithm. Applications and
numerical experiments are presented in Section~\ref{sec:applications}. Finally, 
our concluding remarks are presented in Section~\ref{sec:conclusions}. 

\textbf{Notation.} The $ij$-th entry of the matrix $A$ is denoted by $A_{i,j}$, while the 
$ij$-th entry of the matrix power $A^k,\ k\in \mathbb{Z}$, is denoted by $\left[A^k\right]_{i,j}$. 
The operation $\lfloor x \rfloor$ will round $x$ down to its nearest integer less than or 
equal to it. The term $\mathbb{E}[\cdot]$ denotes the expectation operator, and we will 
denote the expectation of a random variable $\Delta$ by $\mathbb{E}[\Delta]$. The spectral 
radius of the matrix $A$ is equal to the maximum of the absolute values of its eigenvalues 
and will be denoted by $\rho(A)$. The symbol ${\bf 1}_d$ denotes the $d\times 1$ vector of 
all ones while on certain occasions we write  $[d]$ to represent the set $\{1,2\ldots,d\}$. 
The symbol $\exp[x]$ is equivalent to $e^x$ and will be used to reduce 
cluttering of notation whenever necessary. Finally, the variable $\ONE_{[\al=\beta]}$ is 
equal to one when $\al$ is equal to $\beta$, and zero otherwise.

\section{Markov chain Monte Carlo (MCMC) matrix inversion (Ulam-von Neumann algorithm)}
\label{sec:bac}

Consider a $d\times d$ matrix $B=I-A$ where $\rho(A) <1$. The matrix $B$ is non-singular 
and its inverse matrix\footnote{To simplify notation, our presentation focuses on $B^{-1}$ 
but approximating $A^{-1}$ can be achieved in an identical manner by running MCMC on 
$I-\gamma A$ and post-multiplying with an appropriate $\gamma \in \mathbb{R}$.} can be represented via the Neumann series 
\begin{equation*}
B^{-1}=\sum_{k=0}^\infty A^k.
\end{equation*}
Denote $C=B^{-1}=(I-A)^{-1}$. Let $\mcmcX(k)$ be a \emph{periodic and irreducible} Markov chain defined over state 
space $[d]$ and associated with the $d\times d$ transition probability matrix $P$~\cite{Asmu03}, 
where $P_{i,j}$ denotes the transition probability from state $i\in [d]$ to state $j\in [d]$.
 The entries of the matrix $P$ are set such that the sum of the entries of its rows is equal to one \cite{benzi2017analysis}, and $P_{i,j}>0$ if and only if $A_{i,j}\neq 0$. Specifically, we set $P_{i,j}=\frac{|A_{i,j}|}{\sum_{k=1}^d |A_{i,k}|}$ as this simplifies the ratios $A_{i,j}/P_{i,j}$ (see, e.g.,~\cite{ALEXANDROV1998113}) that form a key element in our calculations below.

In the following, we describe the Ulam-von Neumann algorithm to compute $C_{i,j}$ for any index pair $(i,j)\in[d]\times [d]$. 
Let the $r$-th replicate, $r\in[R],\ R\in \mathbb{N}$, of the Markov chain $\mcmcX(k)$ 
be represented as $\mcmcX_r(k)$ and initiated at some state $i\in [d]$, i.e., $\mcmcX_r(0)=i$. Furthermore, let $W^r_0=1$ and define the following sequence of (scalar) random variables for $k\in \mathbb{N}$:
\begin{align*}
W^r_k=W^r_{k-1}\frac{A_{\mcmcX_r(k-1), \mcmcX_r(k)}}{P_{\mcmcX_r(k-1), \mcmcX_r(k)}}.
\end{align*}
Note that $A$ can have negative entries, and so we refer to the ratio in $W^r_k$ generally 
as the \emph{weight} of the transition. The key idea of the Ulam-von Neumann algorithm is then based on the following simple observation.

\begin{lemma}
\label{lem:expectation}
For any initial state $i\in[d]$, 
\begin{equation*}
\ex\left[W_k^r\ONE_{[\mcmcX_r(k)=j]}\right]=\left[A^k\right]_{i,j},\end{equation*}
i.e., the expectation of the random variable $W_k^r\ONE_{[\mcmcX_r(k)=j]}$ is equal to the $(i,j)$-th element of $A^k$
for any $r=1,2,\ldots, R$, and $k\in \mathbb{N}$.
\end{lemma}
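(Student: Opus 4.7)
My plan is to prove this by directly expanding the expectation as a sum over paths of the Markov chain. Since $W_k^r$ is a telescoping product along the realized trajectory, conditioning on the path makes $W_k^r \ONE_{[\mcmcX_r(k)=j]}$ deterministic, and the law of the path factors as a product of one-step transition probabilities, which will cancel against the denominators of $W_k^r$.

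More concretely, I first unroll the recursion to write
\[
W_k^r = \prod_{\ell=1}^{k} \frac{A_{\mcmcX_r(\ell-1)\,\mcmcX_r(\ell)}}{P_{\mcmcX_r(\ell-1)\,\mcmcX_r(\ell)}},
\]
and note that under $\mcmcX_r(0)=i$ the probability of observing the trajectory $(i_0=i,i_1,\dots,i_{k-1},i_k)$ is $\prod_{\ell=1}^{k} P_{i_{\ell-1}\,i_\ell}$. Then I write the expectation as the sum
\[
\ex\!\left[W_k^r\ONE_{[\mcmcX_r(k)=j]}\right]=\sum_{i_1,\dots,i_{k-1}\in[d]} \left(\prod_{\ell=1}^{k} P_{i_{\ell-1}\,i_\ell}\right)\left(\prod_{\ell=1}^{k} \frac{A_{i_{\ell-1}\,i_\ell}}{P_{i_{\ell-1}\,i_\ell}}\right)\Bigg|_{i_0=i,\,i_k=j},
\]
where the $P$-factors cancel and what remains is $\sum_{i_1,\dots,i_{k-1}} A_{i\,i_1}A_{i_1\,i_2}\cdots A_{i_{k-1}\,j} = [A^k]_{ij}$ by the definition of matrix multiplication. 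The base case $k=0$ is handled separately since $W_0^r=1$ and $\ONE_{[\mcmcX_r(0)=j]}=\ONE_{[i=j]}=[A^0]_{ij}$.

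The only technical subtlety is that the ratios $A_{i_{\ell-1}\,i_\ell}/P_{i_{\ell-1}\,i_\ell}$ must be well-defined: this requires the standard absolute-continuity condition $P_{\ell m}=0\Rightarrow A_{\ell m}=0$, which is implicit in the construction recalled in the paper (e.g., $P_{ij}=|A_{ij}|/\sum_k |A_{ik}|$) and ensures that transitions with zero probability contribute zero to the sum by convention. I would record this assumption explicitly at the start. If one preferred an inductive presentation, the same identity follows by conditioning on $\mcmcX_r(k-1)$, using the Markov property together with the inductive hypothesis for $k-1$ to replace the inner sum by $[A^{k-1}]_{i\,i_{k-1}}$, and then summing one more factor $A_{i_{k-1}\,j}$; but the direct path expansion above is the shortest route and the one I would present.

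There is no real obstacle here — the lemma is essentially a bookkeeping identity, and the main thing to make sure of is to write the cancellation cleanly and flag the absolute-continuity assumption.
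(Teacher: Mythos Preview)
Your proof is correct. The paper proves the same identity by induction on $k$, using the tower property $\ex[\,\cdot\,]=\ex[\ex[\,\cdot\,|\mcmcX_r(k)]]$ to peel off one transition at a time, whereas you unroll the whole recursion and sum over paths directly so that the $P$-factors cancel against the path probability in a single stroke. The two arguments are equivalent (indeed you sketch the inductive version yourself at the end), but your presentation has two small advantages: the cancellation is made completely explicit, and you flag the absolute-continuity requirement $P_{\ell m}=0\Rightarrow A_{\ell m}=0$ that is needed for the ratios to be well defined and for the excluded zero-probability paths to contribute nothing to $[A^k]_{ij}$, a point the paper leaves implicit.
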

\begin{proof}
This can be proved by induction. The case $k=0$ and $k=1$ are straightforward. 
For the general case where we update $W_{k-1}^r$ to $W_{k}^r$, we can write:
\begin{align*}
\ex\left[W_{k}^r\ONE_{[\mcmcX_r(k)=j]}\right] =&\ex[\ex[W_{k}^r\ONE_{[\mcmcX_r(k)=j]}|\mcmcX_r(k-1)]]\\=&\ex\left[\ex\left[W_{k-1}^r\frac{A_{\mcmcX_r(k-1), \mcmcX_r(k)}}{P_{\mcmcX_r(k-1), \mcmcX_r(k)}}\ONE_{[\mcmcX_r(k)=j]}\bigg|\mcmcX_r(k-1)\right]\right]
\\=&\ex\left[\left[A^{k-1}\right]_{i,\mcmcX_r(k-1)}\frac{A_{\mcmcX_r(k-1), j}}{P_{\mcmcX_r(k-1), j}}\right]\\=& \left[A^{k}\right]_{i,j}.
\end{align*}
\end{proof}

Let now $\mcmcX_r(k)$ terminate after exactly $k_r\in\mathbb{N}$ transitions, all starting 
from the same state $i\in [d]$, where $k_r$ is constant for all $r=1,2,\ldots,R$. 
The classical Ulam-Von Neumann algorithm approximates $C_{i,j}$ by the formula
\begin{align}
\label{eqn:output}
\widehat{C}^R_{i,j}:=\frac{1}{R} \sum_{r=1}^R\left[\sum_{k=0}^{k_r}W^{r}_k\ONE_{[\mcmcX_r(k)=j]}\right].
\end{align}
Lemma~\ref{lem:expectation} establishes that $W_k^r\ONE_{[\mcmcX_r(k)=j]}$ is an unbiased estimator of $\left[A^k\right]_{i,j}$. Therefore, as $k_r$ tends to infinity, the average 
of the sum $\sum_{k=0}^{k_r}W^r_k\ONE_{[\mcmcX_r(k)=j]}$ approaches $C_{i,j}$, and thus $\widehat{C}^R_{i,j}$ can be viewed as an estimator of $C_{i,j}$. 
A necessary and sufficient condition for the convergence of the Ulam-von Neumann algorithm is as follows. 

\begin{theorem}[Theorem 4.2 in~\cite{doi:10.1137/130904867}]\label{thm1}
Given a $d\times d$ transition probability matrix $P$ and a non-singular matrix 
$A$ such that $\rho(A)<1$, the Ulam-von Neumann algorithm converges if and 
only if $\rho\left(\widehat{H}\right)<1$, where 
$
    \widehat{H}_{i,j} =
    \begin{cases}
      \left[A^2\right]_{i,j}/P_{i,j}        & \mathrm{if}\ P_{i,j}\neq 0 \\
      0        & \mathrm{otherwise}.
    \end{cases}.
$
\end{theorem}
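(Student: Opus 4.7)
The strategy is to reinterpret ``convergence'' of the Ulam--von Neumann algorithm as mean-square convergence of the infinite random sum $S^r := \sum_{k=0}^{\infty} W^r_k \ONE_{[\mcmcX_r(k)=j]}$. Lemma~\ref{lem:expectation} already yields $\mathbb{E}[S^r] = \sum_{k=0}^\infty [A^k]_{ij} = C_{ij}$ whenever $\rho(A)<1$, so the sample mean in \eqref{eqn:output} is consistent in mean as $r_k \to \infty$; the remaining question is whether $S^r$ has finite second moment, which is exactly what the spectral radius of $\widehat{H}$ is designed to capture.

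The key computation is the second moment $\mathbb{E}[(W^r_k)^2 \ONE_{[\mcmcX_r(k)=j]}]$, which I would evaluate by enumerating sample paths of length $k$ from $i$ to $j$. Each path $i=x_0,x_1,\dots,x_k=j$ carries probability $\prod_{l=0}^{k-1} P_{x_l x_{l+1}}$, while $(W^r_k)^2$ contributes the factor $\prod_l A^2_{x_l x_{l+1}}/P^2_{x_l x_{l+1}}$; one factor of $P$ per edge cancels and gives
\begin{equation*}
\mathbb{E}\!\left[(W^r_k)^2 \ONE_{[\mcmcX_r(k)=j]}\right] = \sum_{\text{paths}} \prod_{l=0}^{k-1} \widehat{H}_{x_l x_{l+1}} = [\widehat{H}^k]_{ij}.
\end{equation*}
The convention $\widehat{H}_{ij}=0$ whenever $P_{ij}=0$ is consistent because the chain never traverses such edges (and well-posedness of the algorithm tacitly requires $P_{ij}\neq 0$ wherever $A_{ij}\neq 0$). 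For the cross terms with $0\le k\le l$, writing $W^r_l = W^r_k \prod_{m=k}^{l-1} A_{\mcmcX(m)\mcmcX(m+1)}/P_{\mcmcX(m)\mcmcX(m+1)}$ and conditioning on $\mcmcX_r(k)=j$, a second application of Lemma~\ref{lem:expectation} to the suffix yields
\begin{equation*}
\mathbb{E}[W^r_k W^r_l \ONE_{[\mcmcX_r(k)=j]} \ONE_{[\mcmcX_r(l)=j]}] = [\widehat{H}^k]_{ij}\cdot [A^{l-k}]_{jj}.
\end{equation*}
Since $\rho(A)<1$ makes $\sum_m [A^m]_{jj}$ absolutely summable, the truncated second moment of $\sum_{k=0}^T W^r_k \ONE_{[\mcmcX_r(k)=j]}$ is controlled (up to a finite constant depending only on $A$) by $\sum_{k=0}^T [\widehat{H}^k]_{ij}$. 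By Gelfand's formula this is summable as $T\to\infty$ precisely when $\rho(\widehat{H})<1$, which establishes the ``if'' direction.

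The main obstacle is the ``only if'' direction: showing that $\rho(\widehat{H})\ge 1$ forces divergence rather than a coincidental cancellation between the diagonal second-moment terms and the cross terms. Here I would exploit the fact that $\widehat{H}$ is entrywise nonnegative, so Perron--Frobenius applies, and combine it with irreducibility and aperiodicity of $\mcmcX$ to obtain an entrywise lower bound $[\widehat{H}^k]_{ij}\gtrsim c\,\rho(\widehat{H})^k$ for sufficiently large $k$. Since each $\mathbb{E}[(W^r_k)^2 \ONE_{[\mcmcX_r(k)=j]}]$ is itself already a term in the variance (cross terms with $\cos$-type cancellations are absent precisely because the summands $W^r_k \ONE$ are not sign-constrained while their second moments are nonnegative), these diagonal contributions alone force the variance of the truncated estimator to blow up, yielding divergence of the Ulam--von Neumann algorithm. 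Combining the two directions gives the claimed spectral characterization.
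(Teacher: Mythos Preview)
The paper does not prove this theorem at all: it is quoted verbatim from Ji--Mascagni--Li (2013) and immediately followed by the sentence ``The algorithm and analysis presented throughout the rest of this paper assume that the transition probability matrix $P$ satisfies the convergence criterion in Theorem~\ref{thm1}.'' So there is no ``paper's own proof'' to compare against; your write-up is being measured only against the cited source.

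Your sufficiency argument is essentially the right one and matches the computation in the original reference: the identity $\ex[(W_k^r)^2\ONE_{[\mcmcX_r(k)=j]}]=[\widehat{H}^k]_{ij}$ (with $\widehat{H}_{ij}=A_{ij}^2/P_{ij}$, which is what the cited theorem actually says; the $[A^2]_{ij}$ in the statement here is a transcription slip) together with the cross-term identity and $\rho(A)<1$ gives the finite second moment when $\rho(\widehat{H})<1$.

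The necessity direction, however, has a genuine gap. You assert that ``these diagonal contributions alone force the variance of the truncated estimator to blow up'' because the second moments are nonnegative, but that does not follow: the second moment of the partial sum is
\[
\sum_{k=0}^{T}[\widehat{H}^k]_{ij}\Bigl(2\sum_{m=0}^{T-k}[A^m]_{jj}-1\Bigr),
\]
and since $A$ may have signed entries the bracketed factor (tending to $2C_{jj}-1$) can be zero or negative, so divergence of $\sum_k[\widehat{H}^k]_{ij}$ does not automatically force divergence of the display. You need an additional argument ruling out this cancellation (or, as in the original paper, work with $\sum_j\ex[(W_k^r)^2\ONE_{[\mcmcX_r(k)=j]}]=\sum_j[\widehat{H}^k]_{ij}$ and tie divergence of \emph{that} sum to failure of the estimator). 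A second, smaller gap: irreducibility and aperiodicity of $P$ do not transfer to $\widehat{H}$, which can have extra zeros wherever $A_{ij}=0$, so your Perron--Frobenius lower bound $[\widehat{H}^k]_{ij}\gtrsim c\,\rho(\widehat{H})^k$ needs to be argued on the irreducible component of $\widehat{H}$ carrying the spectral radius, and you must then exhibit at least one pair $(i,j)$ in that component for which the estimator is actually being formed.
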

The algorithm and analysis presented throughout the rest of this paper assume that 
the transition probability matrix $P$ satisfies the convergence criterion in Theorem 
\ref{thm1}.

\section{A regenerative Ulam-von Neumann algorithm: random walks and stochastic fixed-point equations} \label{sec:uvn}

In this section, we present a new MCMC matrix inversion algorithm based on the regenerative 
structure of the discrete Markov chain $\mcmcX(k)$. In contrast to classical MCMC, the 
regenerative approach can approximate all entries of the matrix inverse while starting from 
any randomly initialized state. Moreover, only one such initialization is required since for 
any state $i\in [d]$ the Markov chain $\mcmcX(k)$ restarts itself every time the same state 
is observed, and the non-overlapping segments of the Markov chain are independent.

\subsection{The basics of the regenerative algorithm}
\label{sec:alg_basic}

Let us focus first on the approximation of a diagonal entry $C_{ii},\ i\in [d]$. Instead of $R$ separate Markov chains, we now consider a single Markov chain $\mcmcX(k)$ that starts at state $i$. We let the Markov chain progress 
for an indeterminate number of steps while registering the portions of the 
chain that occur between returns to the initial state $i$. Since each such 
portion occurs independently of the other, the state $i$ is a regeneration 
point \cite{smith1955regenerative}. For any $k\in \mathbb{N}\cup\{0\}$ we define 
the initial state variable $\tau^{ii}_0=0$ followed by 
\begin{align*}
    \tau^{ii}_{k+1}&=\inf\{t>\tau^{ii}_k | \mcmcX(t)=i\}.
\end{align*}
The variable $\tau^{ii}_{k+1}$ denotes the $(k+1)$-th return to state $i$ 
and the Markov chain does not visit state $i$ between $\tau^{ii}_k$ and $\tau^{ii}_{k+1}$. 
Following the above definition of $\tau^{ii}_{k+1}$, we can write  $W_{\tau^{ii}_{k+1}}=W_{\tau^{ii}_{k}} \cdot \al^{ii}_k$, where by the regenerativity 
of the Markov chain $\mcmcX(k)$, the scalars $\al^{ii}_k$ are i.i.d. random variables 
counting the increment of $W_k$ between the $\tau^{ii}_k$ and $\tau^{k}_{k+1}$. More 
precisely, for any $k\ge 1$, we can write
\begin{align*}
\al^{ii}_k=\prod_{\ell=\tau^{ii}_{k-1}+1}^{\tau^{ii}_{k}}\frac{A_{\mcmcX(\ell-1),\mcmcX(\ell)}}{P_{\mcmcX(\ell-1),\mcmcX(\ell)}}.
\end{align*}
Thus, setting $W_0=1$, we can rewrite 
\begin{align}
\label{eqn:gen_defn}
Z_{ii}:=\sum_{k=0}^{\infty}W_k\ONE_{[\mcmcX(k)=i]}&=\sum_{n=0}^\infty W_{\tau_n}\\
=\sum_{n=0}^\infty W_0\prod_{v=1}^n \al^{ii}_v 
\quad&=\quad W_0 + \al^{ii}_1\;\;\sum_{n=1}^{\infty} W_0\prod_{v=2}^{n+1} \al^{ii}_v. \nonumber
\end{align}
Defining $Z_{ii}^{\ge k} = \sum_{n=0}^\infty W_0 \prod_{v=k+1}^{n+k} \al^{ii}_v$ as the infinite sum gathered after the $k$-th return to state $i$, the Markovian memoryless-ness property implies that each $Z_{ii}^{\ge k}$  is identically distributed to $Z_{ii}$. Moreover, $\al^{ii}_1$ and $Z_{ii}^{\ge 1}$ are independent by the same property since they are separated by $X_{\tau^{ii}_1}$ when the chain passes through $i$. Thus, taking expectations on both sides and rearranging:
\begin{equation*}
    \ex[Z_{ii}]=\dfrac{1}{1-\ex[\al^{ii}]}.
\end{equation*}
Meanwhile, from Lemma~\ref{lem:expectation}, we know that
\begin{align}
\label{eqn:gen_mean}
\ex[Z_{ii}]=\sum_{k=0}^{\infty}\ex\left[W_k\ONE_{[\mcmcX(k)=i]}\right]=
\sum_{k=0}^{\infty}\left[A^k\right]_{ii}=C_{i,i}.
\end{align}

For the off-diagonal terms $Z_{ij}, i\neq j$, once again consider 
the starting point $i\in [d]$ of the Markov chain $\mcmcX(k)$ and 
define 
$$\tau^{ij}=\inf\{t>0, |X(t)=j\}\, \, \tau_0^{ij}=0;\quad \al^{ij}=\prod_{\ell=1}^{\tau^{ij}}\frac{A_{\mcmcX(\ell-1),\mcmcX(\ell)}}{P_{\mcmcX(\ell-1),\mcmcX(\ell)}}.$$ Then, we have
\begin{align*}
Z_{ij}  & :=\sum_{k=0}^{\infty}W_k\ONE_{[\mcmcX(k)=j]}\stackrel{d}{=}\sum_{n=0}^\infty\al^{ij} \prod_{k=1}^n\al_k^{jj}=\al^{ij} Z_{jj}.
\end{align*}
Moreover, $\al^{ij}$ and $Z_{jj}$ are independent in the last term. Hence, $C_{i,j}=\ex[\al^{ij}]C_{j,j}$. 

\subsection{Implementation of the regenerative algorithm}

The key to implementing the regenerative algorithm is to estimate the expectations of the cumulative cycle weights $\alpha^{ij}$ for $i,j\in[d]$. 
To keep track of the running weights, cumulative weights and counting of the regenerative cycles (i.e., the disjoint segments of the chain), we introduce three $d\times d$ matrices, $\Lam$, $\Sigma$, and $\Gamma$, respectively.
Once the chain is initialized at some random state $i\in [d]$, the algorithm iterates 
until  $\min_{q,l} \Gamma_{q,l}\ge N$ where $N\in \mathbb{N}$ controls the variance 
of the estimator; see Theorem~\ref{thm:main_CLT}. The proposed regenerative Ulam-von 
Neumann Algorithm is summarized in Algorithm \ref{alg:ruvn}.

\begin{algorithm}
    \setcounter{AlgoLine}{0}
    \SetKwInOut{Input}{Input}
    \SetKwInOut{Output}{Output}

    \Input{$A\in \mathbb{R}^{d\times d},\ P\in \mathbb{R}^{d\times d}$, and scalar $N\in \mathbb{N}$. Also define and set the matrices 
    $\Lam\in \mathbb{R}^{d\times d},\ \Sigma\in \mathbb{R}^{d\times d},\ \Gamma\in \mathbb{N}^{d\times d}$ equal to zero.    }
    {\bf Initialization:} Initialize the Markov chain at some random state $i\in [d]$. \\
    \While{$\min_{q,l} \Gamma_{q,l}< N$}{
  \ (a) \emph{Update the $i$-th row of $\Lam$}:
    $\Lam_{i,k}=\Lam_{i,k}+\ONE_{[\Lam_{i,k}=0]}$, for all $k=1,2, \ldots, d$;
    \\
    (b) \emph{Sample next state $j\in [d]$ of Markov Chain using probability vector $P_{i,\cdot}$}
    \\
    (c) \emph{Update $\Lam$:}
    $\Lam=\Lam\times \frac{A_{i,j}}{P_{i,j}}$;
    \\
    (d) \emph{Update $j$-th column of $\Gamma$:} 
    $\Gamma_{k,j}=\Gamma_{k,j}+1$ if $\Lam_{k,j}\neq 0$, for all $k=1,2, \ldots, d$;
    \\
    (e) \emph{Update $j$-th column of $\Sigma$:} 
    $\Sigma_{k,j}=\Sigma_{k,j}+ \Lam_{k,j}$, for all $k=1,2, \ldots, d$;
    \\
    (f) \emph{Reset the $j$-th column of $\Lam$: }
    $\Lam_{k,j}=0$ for all $k=1,2, \ldots, d$; 
    \\
    (g) \emph{Over-write $i\leftarrow j$.}
    }
    \Output{\begin{align*}
\widehat{C}_{i,j} =\left\{ \begin{array}{cc} \dfrac{1}{1-\dfrac{\Sigma_{i,i}}{\Gamma_{i,i}}} & \mathrm{if}\ \ i=j\\\\ \dfrac{\Sigma_{i,j}}{\Gamma_{i,j}} \dfrac{1}{1-\dfrac{\Sigma_{j,j}}{\Gamma_{j,j}}} & \mathrm{if}\ \  i\neq j.
\end{array} \right.
\end{align*}}
 \caption{Regenerative Ulam-von Neumann Algorithm}\label{alg:ruvn}
\end{algorithm}

The running weights matrix $\Lam$ keeps track of the sample of weight $\alpha^{ij}$ during the current regenerative cycle, and the cumulative weights matrix $\Sigma$ maintains their summation over all the observed cycles. The cycle count matrix $\Gamma$ then gives us the necessary information to construct sample average estimators for all the cycle weights. When the chain reaches state $i\in [d]$, all cycles starting at state $i$ are re-initiated and the elements of the $i$-th row of $\Lam$ which are equal to zero (i.e., corresponding to previous cycles that were closed) are reset to one. As the chain arrives at the next state 
$j\in [d]$, the entries of the matrix $\Lam$ are scaled by ${A_{i,j}}/{P_{i,j}}$ 
and all cycles that terminate at state $j$ close, while the information 
from the $j$-th column of the matrix $\Lam$ is transferred (added) to the respective column of the cumulative weights $\Sigma$. Finally, the $j$-th column of the matrix $\Lam$ is reset to zero. In order to keep the count of the number of cycles in the chain, the $j$-th column of the matrix $\Gamma$ increases by one. The loop continues until the count of the performed cycles is large enough, i.e., larger than an input parameter $N$. Note that the reset of the $j$-th column of $\Lam$ to zero at the end of the current cycle followed by the update of the $i$-th row by one at the start of the next cycle assures that the segments are disjoint and therefore independent.

\begin{remark}
The choice of the matrix $P$ can have a significant impact on the variance of the estimator via influencing the accumulated ratios in $\al^{ii}$. While the optimal choice of MC transition 
matrices to minimize variance is an extensively studied subject, e.g., see~\cite{Glynn2016ExactSV}, 
this topic is beyond the scope of the current article and is left as future work. 
\end{remark}

\color{black}

\subsection{A note on the computation of individual entries}

In contrast to the classical Ulam-von Neumann matrix inversion algorithm, Algorithm 
\ref{alg:ruvn} updates several entries of the matrix inverse per access of a matrix 
entry $A_{i,j}$; see also \cite{guidotti2024fast}. Updating multiple entries is  
appealing when the matrix $A$ is 
implicitly-defined and only a subset of entries of the matrix inverse is sought, 
since pre-computing and storing explicitly all non-zero entries of the matrix 
$A$ is then impractical. Appendix B discusses a variant of 
Algorithm \ref{alg:ruvn} that computes a single column of $B^{-1}$.

\section{Probabilistic analysis of the regenerative algorithm}
\label{sec:analysis}

The analysis in Section \ref{sec:alg_basic} established that the regenerative estimator listed 
in Algorithm \ref{alg:ruvn} is unbiased. To complete the analysis, we need to establish its
second-order statistical properties with a CLT-type of result. In~\cite{10.1093/biomet/asz002} 
it is demonstrated that for multivariate Markov chain Monte Carlo methods, establishing a 
multivariate Markov chain central limit theorem for the underlying process is key to answering 
when the sampling process should terminate so that the desired accuracy can be achieved. 
Thus, our goal is to establish a CLT so that our framework and results can be readily applied.

Let $K \in \mathbb{N}$ denote the number of Markov chain 
transitions (iterations) that have been sampled thus far by Algorithm 
\ref{alg:ruvn}, and let $\Sigma^K$ and $\Gamma^K$ denote the corresponding 
matrices generated by Algorithm \ref{alg:ruvn}. For any pair of states 
$i\in[d],\ j\in[d]$, the scalar $\Sigma_{i,j}^{K}$ produced in Steps 2(d) 
and 2(e) in Algorithm~\ref{alg:ruvn} accumulates the weights over $\Gamma^K_{i,j}$ 
transitions (or cycles) where the Markov chain starts at state $i\in[d]$ and 
transits into state $j\in[d]$ for the first time.

\begin{theorem}
\label{thm:Gamma_CLT}
For an invertible $d\times d$ matrix $A$ and a Markov chain on the state space of the rows of $A$ with a transition matrix $P$ satisfying the convergence criterion in Theorem \ref{thm1}, there exist a $d\times d$ matrix $\Gamma_s$ and a symmetric and positive semi-definite $d^2\times d^2$ matrix $M_s$, such that the quantity $\sqrt{K}\left(\frac{\Gamma^K}{K}-\Gamma_s\right)$ converges to a $d^2$-variate normal variable with covariance matrix $M_s$ as $K\to \infty$.
\end{theorem}
\begin{proof}
The proof is presented in Section~\ref{sec:gamma_matrix}.
\end{proof}
\begin{theorem}
\label{thm:Sigma_CLT}
For an invertible $d\times d$ matrix $A$ and a Markov chain on the state space of the rows of $A$ with a transition matrix $P$ satisfying the convergence criterion in Theorem \ref{thm1}, there exist a $d\times d$ matrix $\Gamma_s$ and a $d^2\times d^2$ symmetric and positive semi-definite matrix $M_a$, such that the quantity $\sqrt{K}\left(\frac{\Sigma^K}{K}-\widehat{\Gamma}_s\right)$, with $\left[\widehat{\Gamma}_s\right]_{i,j}=[{\Gamma}_s]_{i,j}C_{i,j}$ for $i,j\in [d]$,  converges to a $d^2$-variate normal variable with covariance matrix $M_a$ as $K\to \infty$.
\end{theorem}
\begin{proof}
The proof is presented in Section~\ref{sec:sigma_matrix}.
\end{proof}

For the sake of convenience, in the following theoretical result we re-write 
the output obtained after $K$ transitions of Algorithm~\ref{alg:ruvn} as the $d\times d$ matrix $\widehat{C}^K$ with entries  
\begin{align*}
\widehat{C}^K_{i,j} = \frac{\Sigma^K_{i,j}}{\Gamma^K_{i,j}}D_j^K +\ONE_{i=j},\;\;\text{where}\;\;D_j^K:= \frac{\Gamma^K_{j,j}}{\Gamma^K_{j,j}-\Sigma_{j,j}^K}.
\end{align*}
The above definition of $\widehat{C}^K_{i,j}$ is identical to that in the output of Algorithm~\ref{alg:ruvn}, except that in the latter case we divide both the 
numerator and denominator by $\Gamma^K_{j,j}$. 

\begin{theorem}
\label{thm:main_CLT} 
For an invertible $d\times d$ matrix $A$ and a Markov chain on the state space of the rows of $A$ with a transition matrix $P$ satisfying the convergence criterion in Theorem \ref{thm1}, 
there exists a $d\times d$ symmetric and positive semi-definite matrix $M_C$, such that the matrix with entries $\left\lbrace\sqrt{K}\left(\widehat{C}_{i,j}^K-C_{i,j}\right)\right\rbrace_{1\le i,j\le d}$ converges to a $d^2$-variate normal 
variable with covariance matrix $M_C$ as $K\ra \infty$. 
\end{theorem}
\begin{proof}
Let $\gamma^K_{ij}= \ex[\Gamma^K_{i,j}]$ denote the expected number of 
$(i,j)$ regeneration cycles after $K$ iterations and denote by $\overline{\Sigma_{i,j}^{K}}$ 
the summation of weights after $\gamma^K_{ij}$ cycles. When $\Gamma^K_{i,j}\ge \gamma^K_{i,j}$, 
the variable $\overline{\Sigma_{i,j}^{K}}$ is obtained by truncating the weight cumulation 
beyond $\gamma^K_{i,j}$ regeneration cycles, otherwise, it is obtained by adding weights to 
the $(i,j)$ cycles generated according to the same distribution. Then, we can re-write 
$\sqrt{K}\left(\widehat{C}_{i,j}^K-C_{i,j}\right)$ as follows: 
\begin{align*}
&\sqrt{K}\left(\frac{\Sigma_{i,j}^K}{\Gamma^K_{i,j}}D_j^K-(C_{i,j}-\ONE_{i=j})\right)
\\
=  &\sqrt{K}\left(\frac{\Sigma_{i,j}^K}{\sqrt{\gamma^K_{i,j}}}D_j^K -\frac{\Gamma^K_{i,j}}{\sqrt{\gamma^K_{i,j}}}(C_{i,j}-\ONE_{i=j})\right)\frac{\sqrt{\gamma^K_{i,j}}}{\Gamma^K_{i,j}}\\
= & \frac{\sqrt{K\gamma^K_{i,j}}}{\Gamma^K_{i,j}}\left(\frac{\overline{\Sigma_{i,j}^{K}}}{\sqrt{\gamma^K_{i,j}}}D_j^K +\frac{\Sigma_{i,j}^K-\overline{\Sigma_{i,j}^{K}}}{\sqrt{\gamma^K_{i,j}}}D_j^K -\sqrt{\gamma^K_{i,j}}(C_{i,j}-\ONE_{i=j})\right.
\\ & \left.-\frac{\Gamma^K_{i,j}-\gamma^K_{i,j}}{\sqrt{\gamma^K_{i,j}}}(C_{i,j}-\ONE_{i=j})\right)
\\ 
=& \frac{\sqrt{K\gamma^K_{i,j}}}{\Gamma^K_{i,j}}\left(\frac{\overline{\Sigma_{i,j}^{K}}}{\sqrt{\gamma^K_{i,j}}}D_j^K -\sqrt{\gamma^K_{i,j}}(C_{i,j}-\ONE_{i=j})\right) + \frac{\sqrt{K{\gamma^K_{i,j}}}}{\Gamma^K_{i,j}}\left(\frac{\Sigma_{i,j}^K-\overline{\Sigma_{i,j}^{K}}}{\sqrt{{\gamma^K_{i,j}}}}\right)D_j^K
\\&
-\sqrt{K}\frac{\Gamma^K_{i,j}-{\gamma^K_{i,j}}}{{\Gamma}^K_{i,j}}(C_{i,j}-\ONE_{i=j}).
\end{align*}

Thus, the matrix with individual entries $\left\lbrace\sqrt{K}\left(\widehat{C}_{i,j}^K-C_{i,j}\right)\right\rbrace_{1\le i,j\le d}$ can be written as the sum of three matrices 
whose corresponding $(i,j)$ entries are determined by the first, second, and third term 
of the addition listed in the above summation. With the aid of Slutsky's theorem (also known as “converging together'' Theorem, e.g., ~\cite{grimmett2001probability}), the first and third matrices converge to $d^2$-variate normal distributions due to Theorems~\ref{thm:Sigma_CLT} and~\ref{thm:Gamma_CLT}, respectively. 
For the second matrix, following similar arguments as in~\cite[Theorem 3.1]{gut2009stopped} and~\cite{10.1145/1225275.1225279}, we can show that $\left(\Sigma_{i,j}^K-\overline{\Sigma_{i,j}^{K}}\right)/\sqrt{{ \gamma}^K_{i,j}}\ra 0$, in probability, as $K\ra \infty$. To see this, for any $\epsilon, \delta>0$, let $n_1=\lfloor\sqrt{{\gamma}^K_{i,j}} (1-\epsilon)\rfloor$, $n_2=\lfloor\sqrt{{ \gamma}^K_{i,j}} (1+\epsilon)\rfloor +1$. Then, for sufficiently large $K$, we have 
\begin{align*}
&\pr\left[\frac{\left|\Sigma_{i,j}^K-\overline{\Sigma_{i,j}^{K}}\right|}{\sqrt{{ \gamma}^K_{i,j}}}>\epsilon\right]
\le 
\pr\left[\frac{\left|\Sigma_{i,j}^K-\overline{\Sigma_{i,j}^{K}}\right|}{\sqrt{{ \gamma}^K_{i,j}}}>\epsilon,\ \Gamma^K_{i,j}\in [n_1, n_2]\right]+ \pr[\Gamma^K_{i,j}\notin [n_1, n_2]]<\delta,
\end{align*}
where the last inequality follows by noticing that for any $\Gamma^K_{i,j}\in [n_1, n_2]$, $\left(\Sigma_{i,j}^K-\overline{\Sigma_{i,j}^{K}}\right)$ consists of at most $2\epsilon \sqrt{{\gamma}^K_{i,j}}$ terms of summation of regenerative weights. Because of 
Theorem~\ref{thm:Sigma_CLT}, for large enough $K$, we then have  
\begin{align*}
\pr\left[\frac{\left|\Sigma_{i,j}^K-\Sigma_{i,j}^{\bar K}\right|}{\sqrt{{ \gamma}^K_{i,j}}}>\delta/2,\ \Gamma^K_{i,j}\in [n_1, n_2]\right]<\frac{\delta}{2}.
\end{align*}
Meanwhile, Theorem~\ref{thm:Gamma_CLT} leads to 
$\pr\left[\Gamma^K_{i,j}\notin [n_1, n_2]\right]<\frac{\delta}{2}$ 
for sufficiently large $K$. 
\end{proof}

\subsection{CLT-Based Output Analysis}
\label{sec:output_analysis}

The CLT presented in Theorem~\ref{thm:main_CLT} provides a rate at which the estimation error diminishes to zero with respect to the value of $K$. As is further pointed out in ~\cite{10.1093/biomet/asz002}, CLT quantitatively characterizes the quality of the estimator in the form of \emph{confidence domains}, a natural generalization of the widely 
used concept of confidence intervals for univariate estimators. In particular, confidence domains are constructed in~\cite{10.1093/biomet/asz002} as domains that are prescribed 
around the sample value of the estimator and -with high probability- contain the value being estimated. More specifically, consider the estimation of a $p$-dimensional quantity 
$w^*\in \Real^p$ by the vector $w_K$ stemming from $K$ steps of an MCMC estimator, and 
let $M_K\in\Real^{p\times p}$ be a positive semi-definite matrix. For any $\al\in (0,1)$, define the region
\begin{align*}
\mathcal{C}_\al(K):=\{w\in \Real^p : K (w-w_K)^\top M_K^{-1}  (w-w_K)\le T^2_{1-\al, p, q} \},
\end{align*}
where $T^2_{1-\al, p, q}$ is the $(1-\al)$-th quantile of a Hotelling’s $T$-squared 
distribution with dimension $p$, and $q$ degrees of freedom determined by the selection of 
matrix $M_K$. In our case $p=d\times d$, and 
$w_K=[\widehat{C}_{1,1}^K\ \dots\  \widehat{C}_{d,1}^K\ \dots\ \widehat{C}_{1,2}^K\ \dots\ \widehat{C}_{d,2}^K\ \dots\ \widehat{C}_{1,d}^K\ \dots\ \widehat{C}_{d,d}^K]^T$. For the matrix 
$M_K$, one common choice is to choose the sample covariance matrix, which, provided the terminology used in this paper, can be computed as $[M_K]_{i,j}=\frac{1}{K-1}\sum_{k=1}^K([w_{k}]_i-[{\bar w}]_i)([w_{k}]_j-[{\bar w}]_j)$, where ${\bar w}= \frac{1}{K}\sum_{k=1}^K w_k$. Theorem 1 in~\cite{10.1093/biomet/asz002} can be then paraphrased as follows.

\begin{theorem}[{Theorem} 1 in~\cite{10.1093/biomet/asz002}]\label{thmm1}
Let $w_K$ be an unbiased estimator of $w^*$ such that $\lim_{K\ra \infty}\|w_K\|=\|w^*\|$, and assume a CLT such that $\sqrt{K} [w_K-w^*]$ converges in distribution to a $p$-variate normal distribution with mean zero and covariance matrix $M$ such that 
$M_K \ra M$ with probability one. Then, for any $\al\in(0,1)$, the probability $\pr[w^*\in \mathcal{C}_\al(K)]$ converges to $(1-\al)$ as $K\ra \infty$.
\end{theorem}

It is shown, e.g., in~\cite{kook2024covarianceestimationusingmarkov}, that the sample 
covariance matrix $M_K$ indeed converges to the limiting covariance matrix $M$. 
Alternative numerical procedures to estimate the limiting covariance matrix $M$ directly 
can be found in~\cite{agrawal2024markovchainvarianceestimation} and~\cite{TREVEZAS20092242}. Theorem \ref{thmm1} indicates that for a sufficiently large $K$, the region 
$\mathcal{C}_\al(K)$ will contain the desired value $w^\ast$ with a confidence guarantee 
close to the nominal value $(1-\al)$. 
The convergence properties of the regenerative estimator can also be described as a function of $\Gamma_{i,j}^K$ by recalling that the products $\alpha^{i,j}$ collected from each cycle, which are then accumulated in the sums $\Sigma^K_{i,j}$, are i.i.d. Variance estimators for regenerative Markov Chains is an extensively studied subject, e.g., see~\cite{Asmu07Glynn}.

\subsection{Central Limit Theorem~\ref{thm:Gamma_CLT} of matrix $\Gamma$}
\label{sec:gamma_matrix}

In this section we focus on the growth of the entries of the matrix $\Gamma$ in 
Algorithm~\ref{alg:ruvn}. For convenience, we list the computation of the matrix $\Gamma$ separately in Algorithm~\ref{alg:gammamatrix}, which, for the same pseudo-random number sequence, produces the same matrix $\Gamma$ as Algorithm~\ref{alg:ruvn}. Each $d\times d$ matrix $\Pi$ produced by Algorithm~\ref{alg:gammamatrix} is binary but as we discuss next, the output space that the 
matrix $\Pi$ admits can be significantly less than the $2^{d^2}$ possible outcomes. 

\begin{algorithm}
    \setcounter{AlgoLine}{0}
    \SetKwInOut{Input}{Input}
    \SetKwInOut{Output}{Output}

    \Input{$P\in \mathbb{R}^{d\times d}$, scalar $N\in \mathbb{N}$, $d\times d$ matrices  $\Pi=\Gamma=0$.}
    {\bf Initialization:} Initialize the Markov chain at some random state $i\in [d]$. \\
    \While{$\min_{q,l} \Gamma_{q,l}< N$}{\ (a) \emph{Update the $i$-th row of $\Pi$}:
    $\Pi_{i,k}=1$, for all $k=1,\ldots,d$\\
    (b) \emph{Sample next state $j$ of Markov Chain using row probability vector $P_{i,\cdot}$}
    \\
    (c) {\emph{Update $j$-th column of $\Gamma$}: $\Gamma_{k,j}= \Gamma_{k,j}+ \Pi_{k, j}$}, for all $k=1,2, \ldots, d$;\\
    (d) {\emph{Reset the $j$-th column of $\Pi$}: }
$\Pi_{k, j}=0$, for all $k=1,2, \ldots, d$. 
\\
(e) \emph{Over-write $i\leftarrow j$.}
}
    
    \Output{$\Gamma$}
    
    \caption{Calculation of The Matrix $\Gamma$ }\label{alg:gammamatrix}
\end{algorithm}

In each iteration of Algorithm~\ref{alg:gammamatrix}, the matrix $\Pi$ is updated 
twice, i.e., once in step 2(d) where we reset the $j$-th column of $\Pi$, and once again in step 2(a) in the next iteration, where we update the $i$-th row of $\Pi$; note though that due to step 2(e), $i\leftarrow j$ and thus both steps leverage the same newly sampled state $j \in [d]$. We can 
then formalize an iteration of Algorithm~\ref{alg:gammamatrix} via the following operator,  
denoted as $T_j$: 
\begin{align*}
(T_j\Pi)_{u,v}=\left\{\begin{array}{cc} 0 & v=j,\ u\neq j,\\ 1 & u=j, \\ \Pi_{u,v} & \hbox{otherwise.}\end{array}. \right.
\end{align*}
Applying $T_j$ affects only the entries located along the $j$-th row and $j$-th column of the matrix $\Pi$. 
After an initial phase where $T_j$ is invoked at least once for each $j\in[d]$, the entries of the matrix $\Pi$ satisfy the invariant relations $\Pi_{u,v}+\Pi_{v,u}=1$ for all $u\neq v$ and $\Pi_{u,u}=1$, $u\in[d]$. We term this phase as \emph{warm-up phase}. By the \emph{irreducibility and aperiodic assumption} of the Markov chain $X(t)$, we know that this warm-up phase will be finished almost surely. Once the warm-up phase is over, the matrix $\Pi$ satisfies the following lemma. 

\begin{lemma}
\label{lem:repetition}
Let $T^p_J:=T_{j_p}\circ \ldots \circ T_{j_2}\circ T_{j_1}$ with $J:=(j_p, \ldots, j_1)\in[d]^p$, where $\circ$ denotes the composition of operators. Then, for any $i\in[d]$, we have 
$T_i\circ T^p_J \circ T_i\,\Pi=T_i\circ T^p_J\,\Pi$.
\end{lemma}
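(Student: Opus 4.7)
The plan is to exploit the fact that each operator $T_j$ acts on $\Pi$ in a strictly entrywise manner: the value of $T_j\Pi$ at position $(u,v)$ is either a constant dictated by whether $u=j$ or $v=j$, or a verbatim copy of $\Pi_{uv}$ --- in no case does it depend on any other entry of $\Pi$. This locality means that the effect of the inner $T_i$ can be confined to row $i$ and column $i$, while the outer $T_i$ erases any trace it leaves on those very indices, which forces the two sides of the identity to agree.

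First, I would read off the pointwise claim carefully from the definition of $T_j$: for a fixed position $(u,v)$, the value $(T_j\Pi)_{uv}$ equals $1$ if $u=j$, equals $0$ if $v=j$ and $u\neq j$, and equals $\Pi_{uv}$ otherwise. In particular, if two matrices $M,M'$ satisfy $M_{uv}=M'_{uv}$, then $(T_j M)_{uv}=(T_j M')_{uv}$ for every $j$, since the recipe at $(u,v)$ only looks at the entry of the input at the same position.

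Next, by a straightforward induction on $p$, this entrywise locality lifts to any composition $T^p_J$: if $M_{uv}=M'_{uv}$ then $(T^p_J M)_{uv}=(T^p_J M')_{uv}$. Applying this with $M=T_i\Pi$ and $M'=\Pi$, I observe that $T_i\Pi$ and $\Pi$ already coincide at every entry $(u,v)$ with $u\neq i$ and $v\neq i$, since such entries fall into the ``otherwise'' branch and are left untouched by $T_i$. Consequently $T^p_J T_i\Pi$ and $T^p_J\Pi$ also coincide at every such $(u,v)$.

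Finally, I apply the outer $T_i$ to both intermediate matrices. Its action writes the constant $1$ throughout row $i$ and the constant $0$ on the off-diagonal of column $i$ regardless of the input, and copies the input elsewhere. On the row-$i$ and column-$i$ entries the two sides therefore coincide by this common overwriting, while on the remaining entries they coincide by the previous paragraph. This yields $T_i\circ T^p_J\circ T_i\,\Pi = T_i\circ T^p_J\,\Pi$. There is no serious obstacle in the argument; the only subtlety is to state the inductive locality step precisely enough that one sees it forbids any propagation of modifications from row or column $i$ to positions outside them, which is precisely what makes the leading $T_i$ redundant once the trailing $T_i$ is applied.
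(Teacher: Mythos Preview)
Your proof is correct and follows essentially the same approach as the paper: both argue that the outer $T_i$ forces agreement on row~$i$ and column~$i$, while the remaining entries are unaffected by the inner $T_i$ and hence determined by $\Pi$ and $T^p_J$ alone. Your version makes explicit the entrywise locality of each $T_j$ and the induction needed to propagate it through $T^p_J$, whereas the paper compresses this into the single phrase ``all the other values are determined solely by $\Pi$ and $T^p_J$.''
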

\begin{proof}
It can be easily seen that $\Pi_{i,j}=1$ and $\Pi_{j,i}=0$, $j\neq i$, for both $T_i\circ T^p_J \circ T_i\,\Pi$ and $T_i\circ T^p_J\,\Pi$. All other values are determined solely by $\Pi$ and $T^p_J$.  
\end{proof}
Lemma~\ref{lem:repetition} implies that for a sequence of $T_j$ operations, repetitions can be simplified to the last applied operator. More precisely, for $T^m_{N_m}=T_{n_m} \circ\dots\circ T_{n_1} $, with $N_m=(n_m, \ldots, n_1)\in[d]^m$, let the set of distinct $n_i$'s have a cardinality $m^*\le \min(m,d)$ and the sequence\footnote{The ordered sequence $N^*$ can be seen naturally as a part of a permutation on $[d]$.}
$N^*=(n^*_{m^*},\dots,n^*_1)$ consist of all distinct $n_j$'s in an order determined by their first appearance from left to right in $N_m$. Moreover, $n^*_{m^*}=n_m$ and $n^*_k=n_\ell$, $\ell=\inf\left\{k: n_k \notin \{n^*_{m^*}, \ldots, n^*_{k+1}\}\right\}$. If $T^{m^*}_{N^*}=T_{n^*_{m^*}}\circ\ldots \circ T_{n_1^*}$, we have
\begin{align*}
T^m_{N_m}\Pi&=T^{m^*}_{N^*}\Pi
\\
T_{n_m} \circ\dots\circ T_{n_1}\Pi&= T_{n_{m^*}^*}\circ\ldots \circ T_{n_1^*}\Pi\,.
\end{align*}
\begin{lemma}
For any permutation $\sigma$ of $[d]$, let
$
T^d_\sigma=T_{\sigma(d)}\circ T_{\sigma(d-1)}\circ\cdots T_{\sigma(2)}\circ T_{\sigma(1)}
$. 
Then, for any pair of binary matrices $\Pi$ and $\Pi'$, we have
$
T^d_\sigma\Pi=T^d_\sigma\Pi'\,.
$
\end{lemma}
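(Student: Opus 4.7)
The plan is to show that each entry of $T^d_\sigma\Pi$ is determined entirely by $\sigma$, with the initial matrix $\Pi$ contributing nothing. The key structural observation about the operator $T_j$ is that it only rewrites entries in the $j$-th row and the $j$-th column of its input; every other entry passes through unchanged. Since $\sigma$ is a permutation of $[d]$, each index appears exactly once in the composition $T_{\sigma(d)}\circ\cdots\circ T_{\sigma(1)}$, so each row and each column is ``addressed'' exactly one time. This is what forces the output to depend only on $\sigma$.

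First I would dispose of the diagonal entries. For a fixed $u$, the entry $(u,u)$ is invariant under $T_j$ whenever $j\neq u$, since neither the clause $u=j$ nor the clause $v=j,\,u\neq j$ (with $v=u$) is triggered. The unique operator that touches this entry is $T_u$, which sets it to $1$. Hence $(T^d_\sigma\Pi)_{uu}=1$ regardless of $\Pi$.

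Next, for an off-diagonal entry $(u,v)$ with $u\neq v$, I would enumerate the three possibilities for the operator $T_j$: if $j=u$ the entry is overwritten to $1$; if $j=v$ the entry is overwritten to $0$; otherwise the entry is left alone. Consequently, along the sequence $T_{\sigma(1)},T_{\sigma(2)},\ldots,T_{\sigma(d)}$, the entry $(u,v)$ is rewritten exactly twice, at the steps when $\sigma(k)=u$ and when $\sigma(k)=v$. Since each such overwrite destroys whatever value was sitting there previously, the final value is dictated solely by which of $T_u$ or $T_v$ appears later in the composition. Letting $\sigma^{-1}(u)$ and $\sigma^{-1}(v)$ denote the positions at which $u$ and $v$ occur in $\sigma$, the final value is $1$ if $\sigma^{-1}(u)>\sigma^{-1}(v)$ and $0$ otherwise. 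In both cases the answer is independent of $\Pi$, which concludes the proof.

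I do not anticipate a genuine obstacle. The only point that requires care is correctly reading the three branches in the definition of $T_j$ at each of the positions $(u,v)$, $(j,v)$, $(u,j)$, and $(j,j)$, and then verifying that each of the two critical applications $T_u$ and $T_v$ fully overwrites the $(u,v)$ slot. Once those two ``overwriting'' lemmas are in hand, the conclusion reduces to a one-line argument about the last occurrence of $u$ versus $v$ in the ordered sequence $\sigma(1),\ldots,\sigma(d)$.
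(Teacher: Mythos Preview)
Your proposal is correct and is precisely the verification the paper defers with the single line ``It can be verified that the end results of either side do not depend on the initial matrix.'' Your entry-by-entry argument, showing that $(T^d_\sigma\Pi)_{uv}$ equals $1$ or $0$ according to whether $\sigma^{-1}(u)>\sigma^{-1}(v)$, is exactly the content the paper leaves implicit (and matches the informal sketch later in the section where the matrices $\Pi_\sigma$ are described via the bijection $K_d$).
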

\begin{proof}
The proof consists of a verification that the end result of either side does not depend on the initial matrix. 
\end{proof}

Therefore, after the conclusion of the warm-up phase, the matrix $\Pi$ is solely determined by the permutation of $[d]$ corresponding to the last $d$ distinct $T_j$ applied, and thus is independent of the initial state in Algorithm~\ref{alg:gammamatrix}. Because of this, it suffices to initialize $\Pi$ to zero at the beginning of Algorithm~\ref{alg:gammamatrix}. Let us now denote by $S_d$ and $P_d$ the set of all permutations of $[d]$ and the set of all matrices obtained after the warm-up phase, respectively. Then, the application of the last $d$ distinct $T_j$ defines a bijection $K_d$ from $S_d$ to $P_d$. Hence, there exist $d!$ possible distinct matrices in $P_d$.

Consider now the state $X(t)=i$ of the Markov chain $X$ at time $t$, and let $\sigma_t$ be the 
permutation of $[d]$ corresponding to the matrix $\Pi$ generated up to time $t$ by Algorithm \ref{alg:gammamatrix}, i.e., $\sigma_t=K_d^{-1}(\Pi)$. Noticing that $X(t+1)=j$ with probability $P_{i,j}$, 
we have an induced map $\widehat{T}$ on $S_d$ such that 
the updated permutation is determined as $\sigma_{t+1}=\widehat{T}(\sigma_t)= ({\widehat T}_j(\sigma_t),j)$, where for a generic permutation $\sigma=(\sigma(1), \ldots \sigma(\ell-1), r, \sigma(\ell+1),\ldots \sigma(d))$ we define ${\widehat T}_r(\sigma)$ as 
\begin{align*}
{\widehat T}_r(\sigma(1), \ldots \sigma(\ell-1), r, \sigma(\ell+1),\ldots \sigma(d)) = (\sigma(1), \ldots \sigma(\ell-1), \sigma(\ell+1),\ldots \sigma(d)),
\end{align*}
i.e., ${\widehat T}_r(\sigma)$ removes index $r$ from the position $\ell \in [d]$ of the permutation $\sigma$. 
This induces a Markov chain $Y(t)$ over the symmetric group $S_d$ such that $Y(t)=\sigma_t$. The Markov chain $Y(t)$ can be interpreted as a \emph{Random-to-Bottom Shuffling} Markov chain in the jargon of random walk on finite groups (e.g., see~\cite{Saloff-Coste2004}).\footnote{To see the reason for this term, consider a deck of $n$ indexed cards, where at each time an index $j$ is determined with probability $P_{i,j}$ where $i$ denotes the index of the card at the bottom before the card with index $j$ is itself moved to the bottom. The permutations produced by this procedure coincide with those produced by $Y(t)$.} For Markov chains with finite state space, such as $X(t)$ and $Y(t)$, the property of irreducibility implies that any state can be reached in finite time regardless of the present state, and aperiodic chain allows this time to take any value~\cite{Asmu03}. Therefore, it is easy to verify the following lemma.
\begin{lemma}
\label{lem:same_ergodicity}
If $X(t)$ is an irreducible and aperiodic Markov chain with state space $[d]$, $Y(t)$ is an irreducible and aperiodic Markov chain with state space $S_d$.
\end{lemma}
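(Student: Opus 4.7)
The plan is to verify three items: the Markov property of $Y$, irreducibility of $Y$, and aperiodicity of $Y$. The Markov property is immediate from the construction, since $\sigma_{t+1}=(\widehat T_{X(t+1)}\sigma_t,X(t+1))$ is a deterministic function of $\sigma_t$ and $X(t+1)$, and conditional on $\sigma_t$ (equivalently on $X(t)=\sigma_t(d)$) the draw $X(t+1)$ is distributed as $p_{\sigma_t(d),\cdot}$, so it depends on the past only through $\sigma_t$.

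For irreducibility, I would exploit the interpretation of $\sigma_t$ as the \emph{last-occurrence ordering} of the underlying $X$-trajectory: reading $X(0),\ldots,X(t)$ backwards, the distinct values encountered, in order of first appearance, are $\sigma_t(d),\sigma_t(d-1),\ldots,\sigma_t(1)$. Thus reaching a target $\sigma'\in S_d$ from any state reduces to constructing a positive-probability $X$-path whose last-occurrence ordering is $\sigma'$. Since $X$ is irreducible and aperiodic on the finite set $[d]$, there exists $N_0$ such that $[P^n]_{ij}>0$ for every $n\ge N_0$ and every $i,j\in[d]$. The path would be built in $d$ phases: phase $k$ drives the chain from $\sigma'(k-1)$ (with phase $1$ starting from $\sigma(d)$) to $\sigma'(k)$ through a sub-path whose interior stays within $\{\sigma'(k),\ldots,\sigma'(d)\}$, so that the already-fixed indices $\sigma'(1),\ldots,\sigma'(k-1)$ are not visited again after their designated times and their last occurrences remain pinned. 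Concatenating these phases yields an $X$-trajectory of positive probability whose induced $Y$-value at the terminal time is exactly $\sigma'$.

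Aperiodicity of $Y$ is inherited from aperiodicity of $X$: at any $i\in[d]$, $X$ admits return cycles of two coprime lengths $L_1$ and $L_2$, and running $X$ through each such cycle from a point at which $Y=\sigma$ with $\sigma(d)=i$ and a suitable recent prefix brings $Y$ back to $\sigma$ after $L_1$ and after $L_2$ steps, respectively; the return-time set of $Y$ at $\sigma$ therefore contains two coprime integers, and its greatest common divisor is $1$.

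The main obstacle is the phase-wise construction for irreducibility: one must arrange, for every target $\sigma'$, a positive-probability one-step transition from $\sigma'(k-1)$ into the allowed subset $\{\sigma'(k),\ldots,\sigma'(d)\}$, followed by a positive-probability sub-path within this subset reaching $\sigma'(k)$. This is the technically delicate step, since the restriction of $X$ to a subset is neither automatically irreducible nor automatically aperiodic; handling it rigorously requires a careful combination of the aperiodicity and irreducibility of the full chain to route around the forbidden indices using auxiliary cycles of appropriate lengths, and it is on this step that the bulk of the work rests.
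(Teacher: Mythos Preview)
The paper states this lemma without proof, so there is nothing to compare against directly. Your verification of the Markov property is correct. However, the obstruction you flag in the irreducibility argument is real and cannot be circumvented: the phase-wise construction requires, for each $k$, a positive-probability $X$-path from $\sigma'(k-1)$ to $\sigma'(k)$ that avoids $\{\sigma'(1),\ldots,\sigma'(k-1)\}$, and irreducibility plus aperiodicity of $X$ does \emph{not} guarantee such paths exist. In fact the lemma as literally stated is false. Take $d=3$ with $p_{11}=p_{12}=\tfrac12$, $p_{23}=p_{31}=1$, and all other entries zero; this $X$ is irreducible and aperiodic, but the permutation $(3,2,1)$ is transient for $Y$ (the only $Y$-transition into $(3,2,1)$ is its own self-loop coming from $p_{11}>0$), so $Y$ is not irreducible on $S_3$. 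Your aperiodicity sketch is also too loose: an arbitrary $X$-cycle through $i=\sigma(d)$ will in general alter the last-occurrence ordering of the remaining states, so it does not return $Y$ to $\sigma$.

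What \emph{is} true, and what suffices for the CLT that follows, is that $Y$ has a unique recurrent class on which it is aperiodic. Uniqueness follows from a coupling observation: once the underlying $X$-trajectory has visited every state in $[d]$, the permutation $\sigma_t$ is determined entirely by that trajectory and no longer depends on $\sigma_0$; combined with irreducibility of $X$, this forces every starting permutation into the same recurrent class. For aperiodicity on that class, fix a recurrent $\sigma$ and a $Y$-return path of length $L$ along which $X$ visits all $d$ states (such a path exists by recurrence of $\sigma$ and irreducibility of $X$); prefixing it by $X$-cycles at $i=\sigma(d)$ of lengths $k_1L_1+k_2L_2$, where $\gcd(L_1,L_2)=1$ are $X$-return lengths at $i$ guaranteed by aperiodicity, produces $Y$-return paths of every length $L+k_1L_1+k_2L_2$, since the suffix visits all states and therefore overwrites the whole permutation. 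The $Y$-return-time set thus has $\gcd$ equal to $1$.
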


We are now ready to proceed with the proof of Theorem \ref{thm:Gamma_CLT}.

\begin{proof}[Proof of Theorem~\ref{thm:Gamma_CLT}] 
As we demonstrated in this section, after the warm-up phase is completed, the increments of 
the $\Gamma$ matrix correspond to the random selection of $d!$ matrices, according to 
the state in which the sampling Markov chain lies.  
From Lemma~\ref{lem:same_ergodicity}, we know that the Markov chain $Y(t)$ on the symmetric group $S_d$ is geometrically ergodic, which means that the Markov chain converges to its invariant measure exponentially fast, since it is an irreducible and aperiodic Markov chain on a finite state space. Meanwhile, the main CLT from~\cite{ChanGeyer1994} states that for any 
geometrically ergodic Markov chain $Z_n$ defined on a general state space $\bX$, and any Borel function on $\bX$ to a finite-dimensional Euclidean space, $f:\bX\ra \Real^*$, satisfying $\ex_\pi\|f\|^{2+\delta} :=\int_\bX \|f(x)\|^{2+\delta} \pi(dx)<\infty$ for some $\delta>0$, the quantity $\sqrt{n} ({\bar f}_n-\ex_\pi f)$ with ${\bar f}_n:=n^{-1}\sum_{i=1}^nf(Z_i)$ and $\ex_\pi f:=\int_\bX f(x)\pi(dx)$ converges to a Gaussian distribution with a covariance matrix $\Sigma_f$ as $n\ra \infty$. The proof follows from applying this CLT to $Y(t)$ and a uniformly bounded function $f:S_d\ra \{0,1\}^{d\times d}, \sigma\mapsto K_d(\sigma)$. 
\end{proof}

\subsection{Central Limit Theorem~\ref{thm:Sigma_CLT} of matrix $\Sigma$}
\label{sec:sigma_matrix} 

In this section we provide the necessary theoretical discussion to prove Theorem~\ref{thm:Sigma_CLT}. Due to the complexity of the proof, we structure 
our presentation such that Theorem~\ref{thm:Sigma_CLT} is a direct 
consequence of Theorem~\ref{thm:main_clt} and Lemma~\ref{lem:exponential_decay_correlation}. Central limit theorems for dependent random vectors 
have been long studied\footnote{We could not find the exact results required for our purpose in the published literature; references ~\cite{DoukhanWintenberger2006} and~\cite{refId0} are probably the closest related. Therefore, we include a detailed proof of Theorem~\ref{thm:main_clt} for completeness.} in the probability community, e.g., see~\cite{DOUKHAN1999313},~\cite{10.3150/23-BEJ1614}, and~\cite{10.1214/16-AOS1512}. 
Although we follow the general idea of Lindeberg method and block techniques which can be found in the proofs of many CLT, e.g., \cite{DoukhanWintenberger2006}, technical calculations critical to the vector and dependence structures presented in this section, e.g., the usage of Lemma~\ref{lem:delta_moment}, are different from what appears in the current literature. 
The following definition of weak dependence can be found in ~\cite{DOUKHAN1999313} and~\cite{DoukhanWintenberger2006}.

\begin{definition}
\label{def:weak_dependence}
Process $(X_n)_{n\in \bZ}$ is said to be $({\epsilon_r}, \psi)$-weakly dependent, ${r\in\bZ}$, for a sequence of real numbers $\epsilon_r \ra 0$ and a function $\psi: \mathbb{N}^2\times (\Real_+)^2 \ra \Real_+$ if for any $r\ge 0$ and $(u+v)$-tuples of integers $s_1\le \ldots \le s_u\le s_u+r\le t_1\le \ldots t_v$, we have,
\begin{align*}
|\cov(f(X_{s_1},\ldots, X_{s_u}), g(X_{t_1},\ldots, X_{t_v}))| \le \psi(u,v, \lip f, 
\lip g)\epsilon_r
\end{align*}
where $f$ and $g$ are two real valued function satisfying $\|f\|_\infty, \|g\|_\infty \le 1$, and $\lip f$ and $\lip g$ are the Lipschitz constants of functions $f$ and $g$, respectively.
More precisely, we have, 
\begin{align*}
\lip f:= \sup_{(x_1, x_2, \ldots, x_u)\neq (y_1, y_2, \ldots, y_u)} \frac{|f(x_1, x_2, \ldots, x_u)- f(y_1, y_2, \ldots, y_u)|}{\|x_1-y_1\|+\ldots+ \|x_u-y_u\|}.
\end{align*}
\end{definition}
\begin{theorem}
\label{thm:main_clt}
Assume $\Real^d$ stationary random vectors $(X_n)_{n\ge 0}$, $\ex[X_0]=0$ and  $\ex\left[\|X_0\|^m\right] <\infty$ for some $m>2$. Furthermore, if $\{X_n\}$ is a $({\epsilon_r}, \psi)$-weakly dependent sequence with $\epsilon_r =O(r^{-\kappa})$ for $\kappa > (2m-3)/(m-2)$, then ${\widehat X}_n=\frac{1}{\sqrt{n}} \sum_{k=1}^n X_k$ converges in distribution to $N(0, M^2)$ with $M^2= \sum_{k=0}^\infty \cov(X_0, X_k)=\sum_{k=0}^\infty \ex[X_0X_k^T]$. 
\end{theorem}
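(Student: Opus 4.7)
The strategy is the Bernstein big-block/small-block decomposition combined with a Lindeberg swapping argument that exploits the weak-dependence coefficients $\epsilon_r$ to make big blocks behave as if independent. First I would partition $\{1,\ldots,n\}$ into alternating big blocks $I_j$ of length $p=p(n)$ and small blocks $J_j$ of length $q=q(n)$, with $p,q\to\infty$, $q=o(p)$, and the number of block pairs $k\approx n/(p+q)$. Set $B_j=\sum_{i\in I_j}X_i$ and $b_j=\sum_{i\in J_j}X_i$, so that $\widehat{X}_n = n^{-1/2}\sum_{j=1}^k(B_j+b_j)$. It then suffices to show (i) $n^{-1/2}\sum_j b_j \to 0$ in probability; (ii) $n^{-1/2}\sum_j B_j$ has the same limit in distribution as $n^{-1/2}\sum_j B_j^*$, where the $B_j^*$ are independent copies of the $B_j$; and (iii) $n^{-1/2}\sum_j B_j^*$ converges to $N(0,\Sigma^2)$.

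Step (i) follows from the moment estimate furnished by Lemma~\ref{lem:delta_moment}: under stationarity and $(\epsilon_r,\psi)$-weak dependence the covariances $\cov(X_0,X_r)$ are summable (this uses a Davydov-type bound $\|\cov(X_0,X_r)\|\lesssim \epsilon_r^{1-2/m}$ combined with the rate $\kappa(1-2/m)>1$, which is automatic under our hypothesis), so $\ex\|\sum_j b_j\|^2 \le k\,\var(b_1)\lesssim kq$, which after normalization is $O(q/p)\to 0$. For step (ii), I would use the Lindeberg telescoping technique on the characteristic function $\varphi_n(t)=\ex\exp(i t^\top n^{-1/2}\sum_j B_j)$: replace $B_j$ one at a time with an independent copy and bound each swap by the weak-dependence inequality applied to the bounded Lipschitz function $x\mapsto \exp(i t^\top x/\sqrt{n})$, whose Lipschitz constant is $\|t\|/\sqrt{n}$. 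Accumulating the $k$ swap errors gives a bound of order $k\,\psi(1,1,\|t\|/\sqrt{n},\|t\|/\sqrt{n})\,\epsilon_q$, multiplied by moment factors controlled through Lemma~\ref{lem:delta_moment}, which vanishes once $q$ grows fast enough relative to $p$.

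For step (iii), the triangular array $\{n^{-1/2}B_j^*\}_{j=1}^k$ is row-wise independent and the classical Lindeberg--Feller CLT applies. Variance convergence $\tfrac{k}{n}\cov(B_1)\to \Sigma^2$ follows from stationarity together with $\cov(B_1)=\sum_{|r|<p}(p-|r|)\cov(X_0,X_r)$ and dominated convergence using the covariance summability. The Lindeberg truncation condition is verified via the $m$-th moment bound $\ex\|B_j^*\|^m\lesssim p^{m/2}$ (up to logarithmic refinements from the same moment lemma) together with the hypothesis $m>2$.

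The main obstacle is rate balancing. One selects $p=n^a$ and $q=n^b$ with $b<a<1$ so that simultaneously the small-block error $O(n^{b-a})$, the swap error (of order $n^{1-a} \epsilon_q$ times Lipschitz and moment factors yielding a power $n^{1-a-b\kappa}$), and the Lindeberg ratio (which degrades like $p^{m/2}/(kp)^{m/2}$ modulated by moment factors) all tend to zero. Solving the resulting system of inequalities in $a$ and $b$ pins down exactly the hypothesis $\kappa>(2m-3)/(m-2)$. The technical heart of the proof is tracking the vector-valued covariance structure carefully through these estimates---rather than reducing to a scalar variance and then recombining---which is precisely where the authors signal that their argument refines existing treatments in \cite{DoukhanWintenberger2006} and \cite{refId0}.
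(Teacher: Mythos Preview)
Your proposal is correct and follows essentially the same route as the paper: Bernstein big-block/small-block decomposition plus a Lindeberg swapping argument on the characteristic function, with the rate balancing of $p$ and $q$ producing exactly the threshold $\kappa > (2m-3)/(m-2)$. The only cosmetic difference is that the paper splits your step~(iii) into two separate pieces (its terms $III$ and $IV$), explicitly performing a second Lindeberg swap from the independent block copies to matching Gaussians via Lemma~\ref{lem:delta_moment} rather than invoking Lindeberg--Feller as a packaged result.
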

\begin{proof}
    The Proof can be found in Appendix A. 
\end{proof}

Recall now the Markov chain $X(t)$, and for any pair $i\neq j$ of starting state $i\in[d]$ 
and ending state $j\in[d]$, define the 
variable $\zeta_0^{ij}=\inf\{t\ge 0 |X(t)=i\}$. Moreover, for any $K\ge 1$ define 
\begin{align*}
\tau_{K}^{ij}=&\inf\{t>\zeta_{K-1}^{ij} |X(t)=j\},
\\
\zeta_{K}^{ij}= &\inf\{t>\tau_{K}^{ij} |X(t)=i\},
\\
\al_K^{ij}=&\prod_{\ell=\zeta_{K-1}^{ij}+1}^{\tau_K^{ij}}\frac{A_{\mcmcX(\ell-1),\mcmcX(\ell)}}{P_{\mcmcX(\ell-1),\mcmcX(\ell)}}.
\end{align*}
\begin{lemma}
\label{lem:exponential_decay_correlation}
For any fixed integer $n$ and any $(i,j)\neq (k,\ell)$, the correlation between $\al^{ij}_n$ and $\al^{k\ell}_{n+m}$ decays exponentially as $m$ increases. 
More precisely, there exist constants $C_1, C_2>0$ depending only on the transition probability, such that,  $\cov(\al^{ij}_1, \al^{k\ell}_{m})\le C_1 e^{-C_2 m}$. 
\end{lemma}
\begin{proof}
It suffices to show that the correlations between $\al^{ij}_1$ and $\al^{k\ell}_{m}$ decay exponentially with respect to $m$. To see this, we have, 
\begin{align*}
\cov(\al^{ij}_1, \al^{k\ell}_{m}) = & \ex[\al^{ij}_1 \al^{k\ell}_{m}]-\ex[\al^{ij}_1]\ex[\al^{k\ell}_{m}]\\
= & \ex[(\al^{ij}_1-\ex[\al^{ij}_1]) (\al^{k\ell}_{m}-\ex[\al^{k\ell}_{m}])]
\\= & \ex[(\al^{ij}_1-\ex[\al^{ij}_1])\ONE \{\tau^{ij}_1\ge m\} (\al^{k\ell}_{m}-\ex[\al^{k\ell}_{m}])]
\\ \le &[\ex[(\al^{ij}_1-\ex[\al^{ij}_1])^2\ONE \{\tau^{ij}_1\ge m\}]^\frac12 [\ex(\al^{k\ell}_{m}-\ex[\al^{k\ell}_{m}])^2]^\frac12.
\end{align*}
Since the variable $\al$ is uniformly bounded, there exist a $C>0$, such that 
\begin{align*}
\cov(\al^{ij}_1, \al^{k\ell}_{m}) 
\le &C\pr[\tau^{ij}_1\ge m]^\frac12.
\end{align*}
It is known that hitting time $\tau^{ij}_1$ has finite exponential moments in an ergodic Markov chain, see e.g.~\cite{meyn93}, that is, there exists a $\xi>0$, such that $\ex[\exp[\xi\tau^{ij}_1]]<\infty$, for all $1\le i,j\le d$. Then, Markov's inequality gives us $\pr[\tau^{ij}_1\ge m]=\pr[\exp[\xi\tau^{ij}_1]\ge \exp[\xi m]]\le \ex[\exp[\xi\tau^{ij}_1]]e^{-\xi m}$. Thus, the lemma follows.
\end{proof}
Lemma \ref{lem:exponential_decay_correlation} shows that the correlation between $\al^{ij}_n$ and $\al^{k\ell}_{n+m}$ decays exponentially with respect to the difference of the cycle indices, and thus 
the weak dependence in Definition~\ref{def:weak_dependence} is satisfied.

\section{Numerical experiments}
\label{sec:applications}

Our numerical experiments are conducted in a Matlab environment (version R2023b), 
using 64-bit arithmetic, on a single core of a computing system equipped with an
Apple M1 Max processor and 64 GB of system memory. All numerical results reported 
in this section represent sample averages over ten independent executions with 
different random seeds. 
The matrices used throughout our experiments are constructed as follows.
\begin{itemize}
\item {\bf Discretized Laplacian:} We consider a second central difference 5-point 
and 7-point  discretization of the Laplace operator $\Delta$ on the unit square 
$\Omega=[0,1]\times[0,1]$ and unit cube $\Omega=[0,1]\times[0,1]\times[0,1]$ using 
Dirichlet boundary conditions, respectively. 
\item {\bf Model covariance:} We consider the following model covariance 
matrix, e.g., \cite{kalantzis2013accelerating,kalantzis2018scalable}: 
\begin{equation*} 
    A_{i,j} =
    \begin{cases}
      1+\sqrt{i}        & \mathrm{if}\ i==j \\
      1/|i-j|^2        & \mathrm{if}\ i\neq j
    \end{cases},
\end{equation*}
where the off-diagonal entries of $A$ decay away the main diagonal in order to 
simulate a decreasing correlation among the variables. 
\end{itemize}
For each matrix problem, we set $\gamma=\frac{1}{1.1 \rho(A)}$ and modify the resulting 
matrix as $A = \gamma A$ such that the spectral radius of the matrix $A$ is less than 
one. Moreover, in all of our experiments, the matrix $P$ is set such that 
$P_{i,j}=\frac{|A_{i,j}|}{\sum_{k=1}^d |A_{i,k}|}$.

\subsection{Error metric and number of iterations for the classical and regenerative algorithms}

Unless mentioned otherwise, we will monitor the accuracy of the classical 
(Ulam-Von Neumann) and regenerative algorithms by keeping track of the maximum (absolute) 
entry-wise error of the matrix inverse approximation. In particular, if $\widehat{C}$ denotes 
the approximation of the matrix inverse $B^{-1}$, we assess the performance of the two algorithms 
by comparing their respective modulus of $\max_{i,j}|\widehat{C}_{i,j}-B^{-1}_{i,j}|$. In order to facilitate a practical comparison between the classical and regenerative algorithms, we perform 
the same number of iterations for both approaches. In particular, recall from Section \ref{sec:bac} that the classical Ulam-Von Neumann algorithm approximates all $d$ entries of the $i$-th row 
of the matrix inverse $B^{-1}$ as
$\widehat{C}^R_{i,j}:=\frac{1}{R} \sum_{r=1}^R\left[\sum_{k=0}^{k_r}W^k_r\ONE_{[\mcmcX_r(k)=j]}\right],\ \ i\in [d],\ \ j=\{1,2,\ldots,d\}$. 
The variable $R$ denotes the total number of Markov chain replications while $k_r$ denotes 
the length of each replication. Throughout our experiments we consider eight different values for the number of replications 
$R\in 8\times [1,4,16,64,256,1024,4096,16384]$ and fix $k_r=d/4$, for a total of $dRk_r$ iterations. Similarly, 
we always set $N=dRk_r$ in Algorithm \ref{alg:ruvn}.

\subsection{Approximation of the entire matrix inverse}

\begin{figure}
    \centering
    \includegraphics[width=1.0\linewidth]{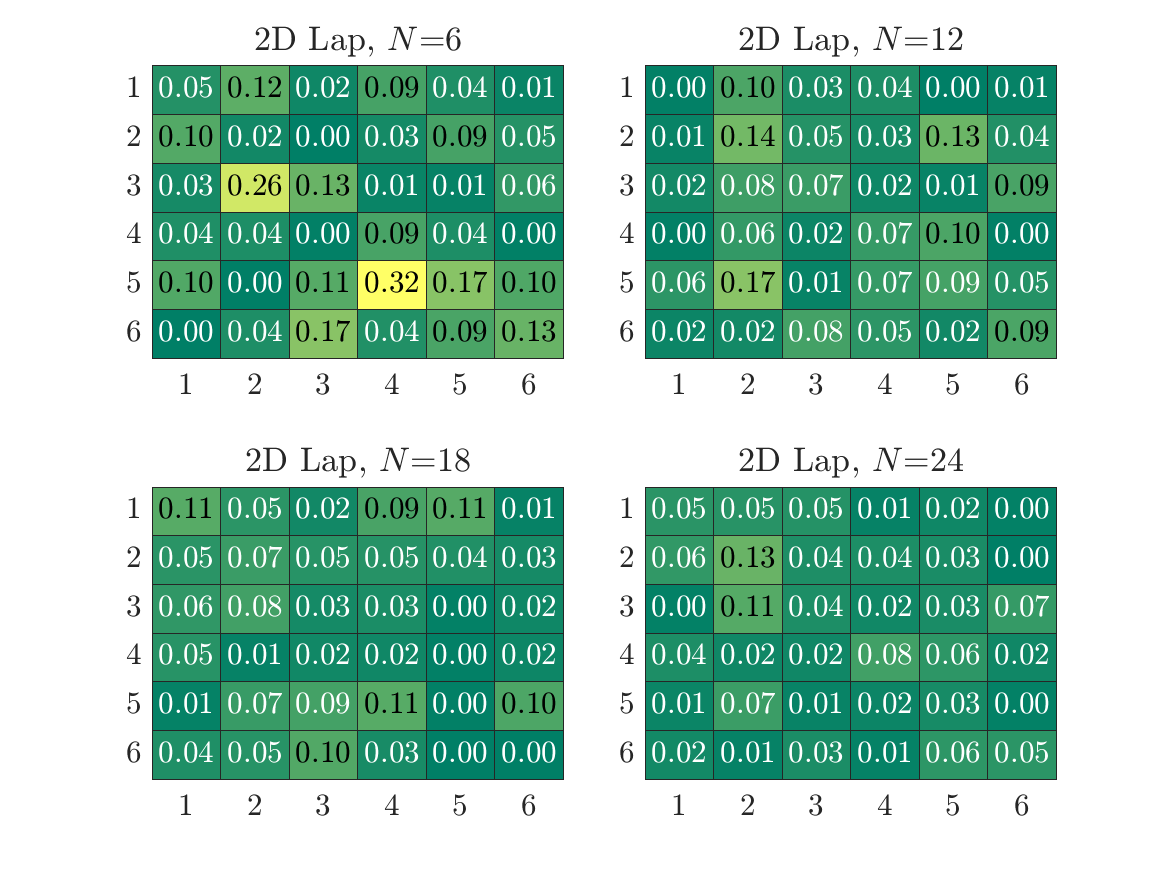}
    \vspace*{-0.25in}
    \caption{Entry-wise magnitude of the matrix $B^{-1}-\widehat{C}$ for the 5-point stencil discretization.}
    \label{fig:1}
\end{figure}

\begin{figure}
    \centering
    \includegraphics[width=1.0\linewidth]{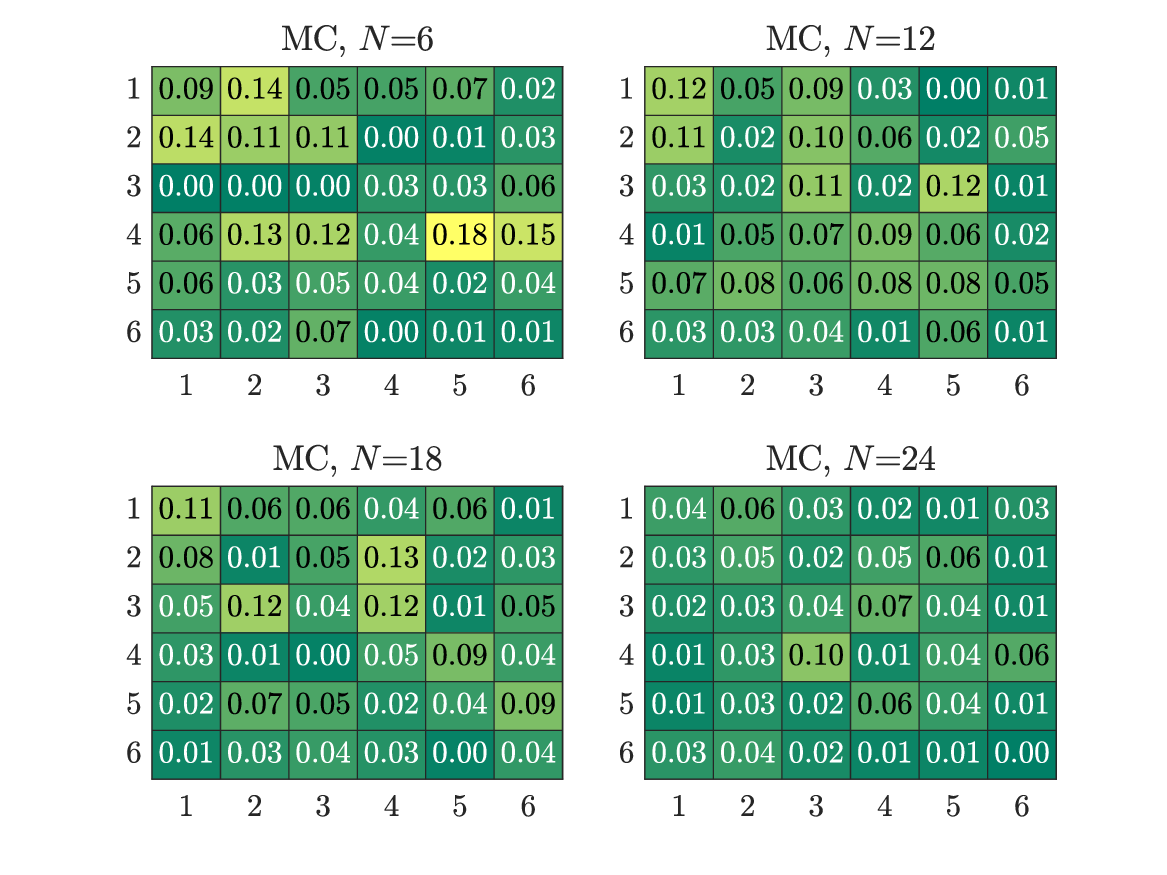}
    \vspace*{-0.25in}
    \caption{Entry-wise magnitude of the matrix $B^{-1}-\widehat{C}$ for the model covariance problem.}
    \label{fig:2}
\end{figure}
In this section we consider the approximation of the entire matrix inverse $B^{-1}$.

Figures \ref{fig:1} and \ref{fig:2} visualize the entry-wise absolute error 
$B^{-1}_{i,j}-\widehat{C}_{i,j}$ of all entries for a 5-point stencil discretization 
of the Laplacian operator and a model covariance matrix, respectively. For 
the sake of visualization, we set the dimension of each matrix equal to $d=6$ and vary
the value of $N$ manually. Larger values of $N$ result in more iterations of the 
Markov chain, and thus a statistically smaller absolute error.

We now consider the performance of the classical and regenerative algorithms on a 
set of six larger problems, divided in three subgroups as follows. The first group 
consists of two 5-pt stencil discretizations with a step size $\sqrt{d}=32$ and 
$\sqrt{d}=64$ along both spatial directions, resulting to problems of size $d=1024$ 
and $d=4096$, respectively. The second group consists of two 7-pt stencil 
discretizations where the first discretization has a fixed step size $d^{1/3}=10$ 
along all three spatial directions and the second discretization has a step size of 
twenty along the first two spatial directions and a step size of ten along the third 
direction, resulting to problems of size $d=1000$ and $d=4000$, respectively. Finally, 
the third group consists of two model covariance matrices of size $d=512$ and $d=1024$, 
respectively.

\begin{figure}
    \centering
    \includegraphics[width=0.42\linewidth]{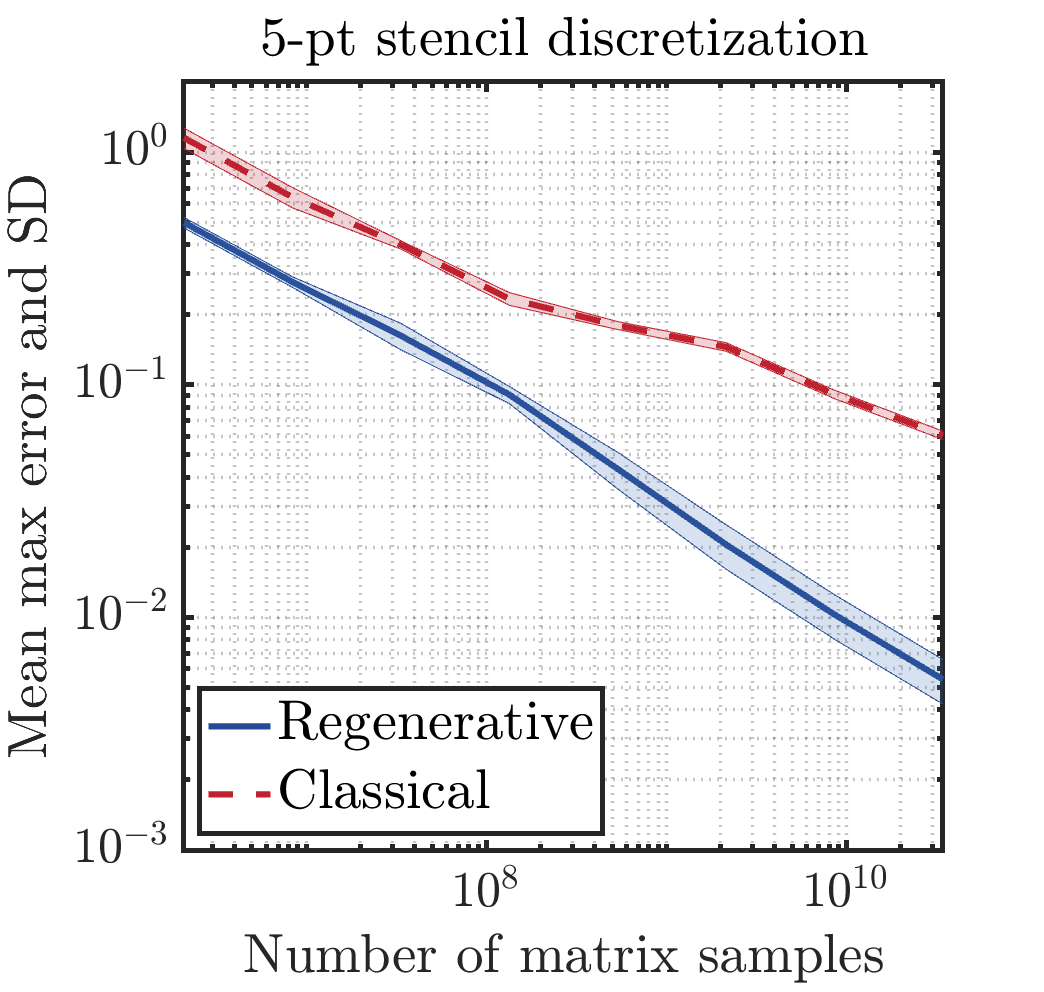}
    \includegraphics[width=0.42\linewidth]{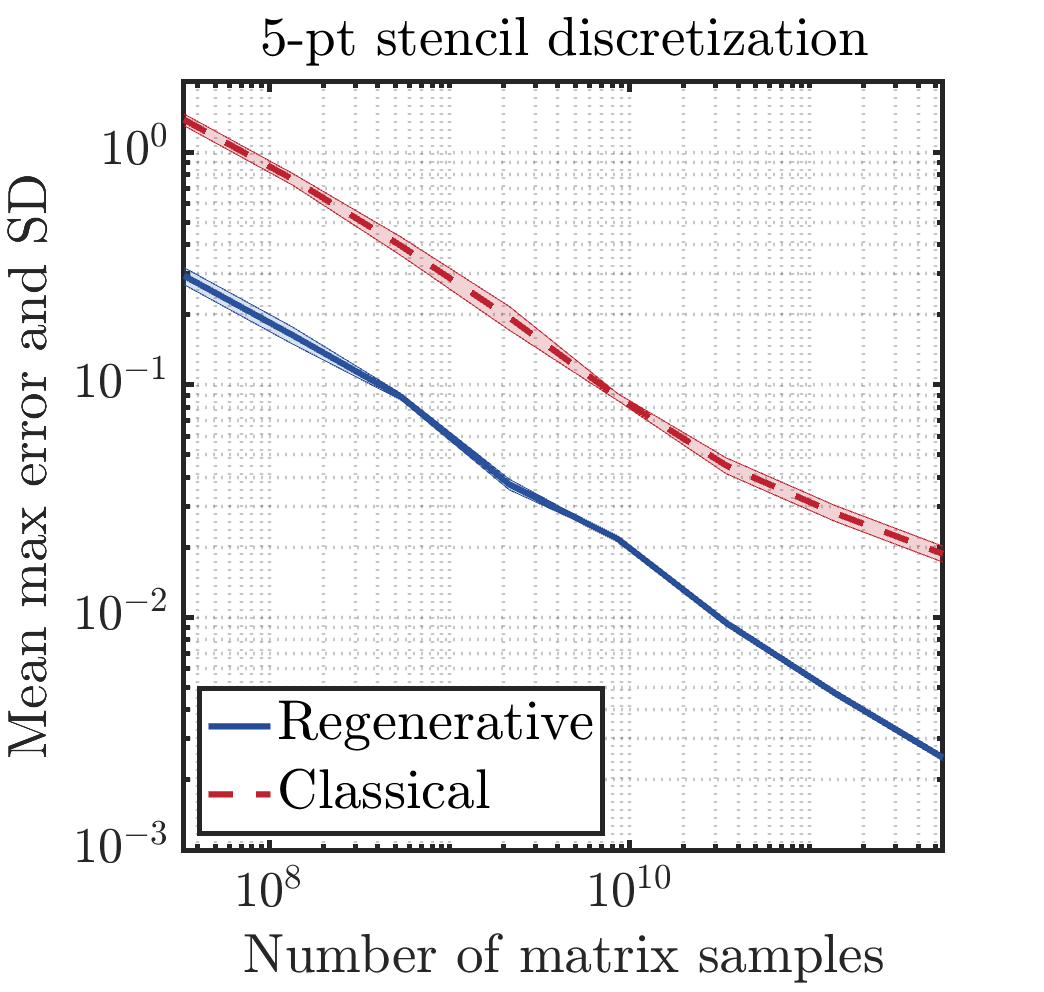}\\
    \includegraphics[width=0.42\linewidth]{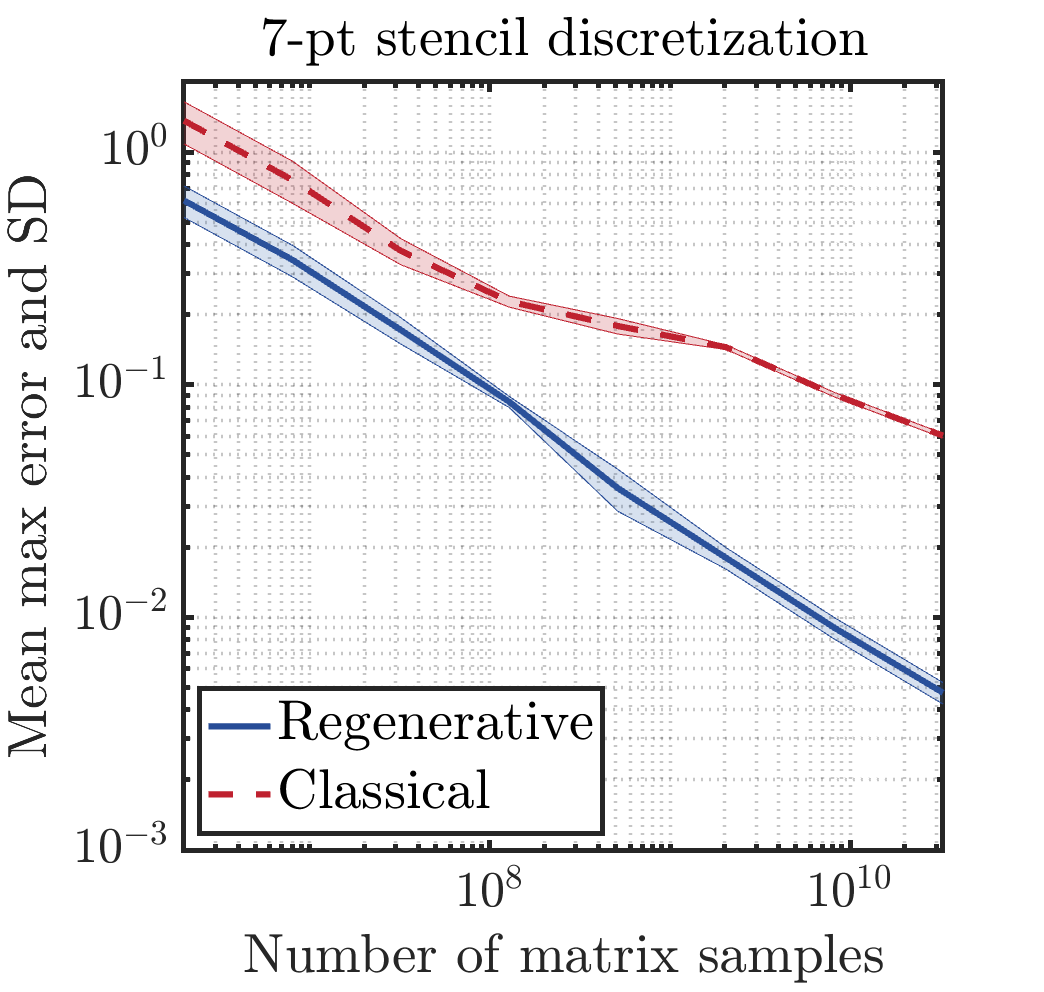}
    \includegraphics[width=0.42\linewidth]{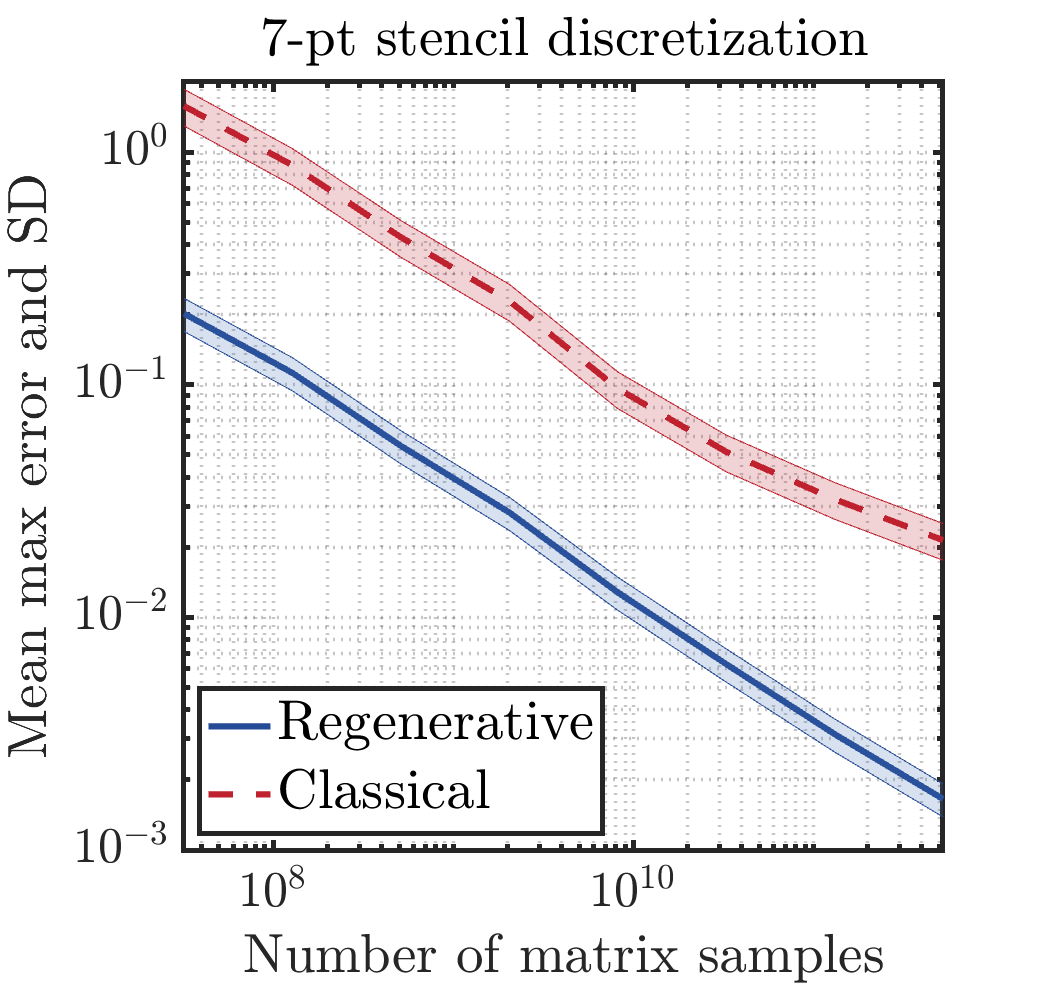}\\
    \hspace{0.099in}\includegraphics[width=0.42\linewidth]{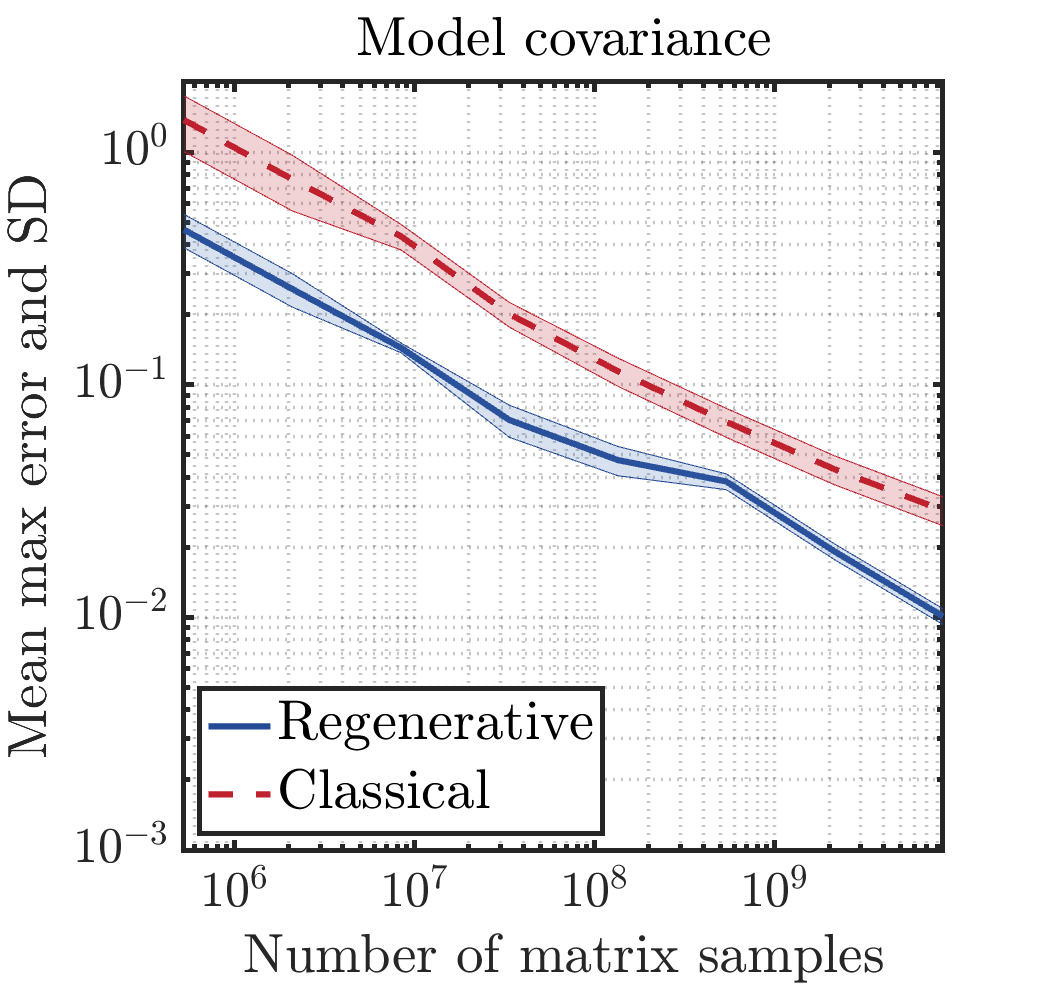}
    \includegraphics[width=0.42\linewidth]{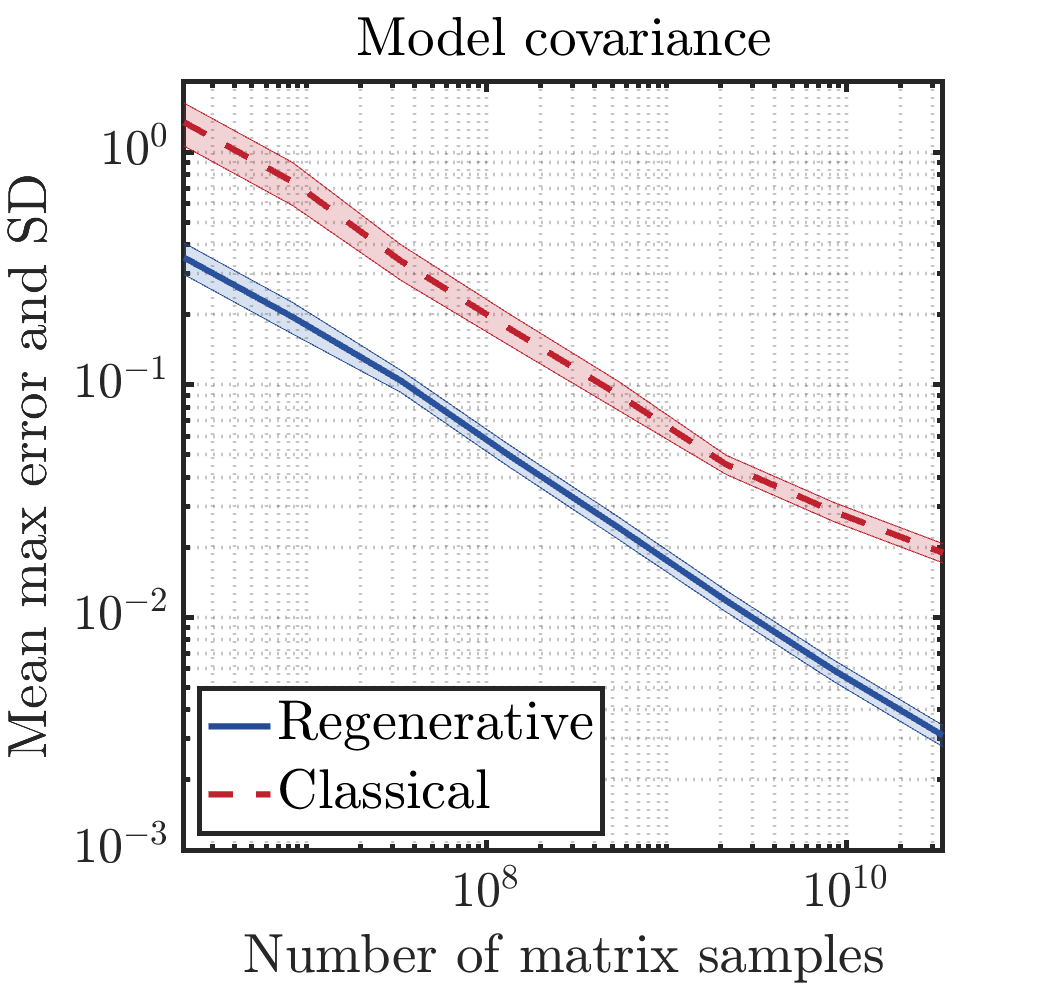}
    \caption{Maximum entry-wise error as a function of the number of entries $A_{i,j}$ sampled. 
    Top row: 5-pt stencil discretization (left: $d=1024$, right: $d=4096$). Center row: 7-pt stencil discretization (left: $d=1000$, right: $d=2000$). Bottom row: model covariances (left: $d=512$, right: $d=1024$).}\label{fig:13}
\end{figure}
Figure \ref{fig:13} plots the maximum entry-wise error achieved by the approximate matrix inverse returned by classical (dashed curve) and regenerative (solid curve) algorithms 
as the number of iterations $dRk_r$ (i.e., number of entries $A_{i,j}$ sampled) varies according 
to the eight different values of $R\in 8\times [1,4,16,64,256,1024,4096,16384]$. In addition to the mean, we also plot the standard deviation over ten independent executions with different random seeds. In summary, the regenerative algorithm returns a more accurate approximation with a corresponding error that constitutes an up to an order of magnitude improvement.

\subsection{An application in graph analytics}

We consider the application of Algorithm \ref{alg:ruvn} to determine Katz graph 
centrality, a centrality measure which extends the concept of eigenvector centrality 
by considering the influence of nodes that are connected through a path of intermediate 
nodes, i.e., beyond the immediate list of neighboring nodes \cite{benzi2014matrix}. 
Given a $d\times d$ adjacency matrix $A$, the Katz centrality of node $i$ is equal to 
$x_i = \sum_{k=1}^{\infty} \alpha^k \sum_{j=1}^{d} A_{i,j} x_j$ where the damping scalar 
$\alpha \in (0,1/\rho(A))$ controls the influence of the implicit walks. 
Gathering all centrality scores on a vector $x\in \mathbb{R}^d$, Katz centrality is 
equivalent to solving the sparse linear system $(I-\alpha A)x={\bf 1}_d$. Once the 
solution $x$ is obtained, we can rank the nodes according to their importance as indicated 
by the modulus $|x_i|,\ i=1,\ldots,d$, e.g., see \cite{kalantzis2025single} for additional 
details on walk-based centralities, and \cite{guidotti2024fast} for an existing 
application of an MCMC-type approach for Katz centralities. 
As our example, we pick the \texttt{IBM32} graph from the SuiteSparse matrix collection \cite{davis2011university}, a network from the original Harwell-Boeing sparse matrix 
test collection which represents interactions between $d=32$ leaflets 
from a 1971 IBM advertisement conference. We set $\alpha=0.85/\|A\|_2$ and call Algorithm \ref{alg:ruvn} with a varying value of $N$ to compute an approximation of 
$B^{-1}=(I-\alpha A)^{-1}$. 

The left subplot of Figure \ref{fig:5} plots the maximum entry-wise approximation error 
between $B^{-1}$ and $\widehat{C}$ as well as the maximum entry-wise error between the 
ideal Katz centrality $B^{-1}{\bf 1}_d$ and its approximation $\widehat{C}{\bf 1}_d$. 
As anticipated, performing more iterations improves the quality of the approximation. 
Moreover, approximating $B^{-1}{\bf 1}_d$ can be more challenging due to 
the accumulation of all $d$ approximation errors per row of $\widehat{C}$. The right 
subplot of Figure \ref{fig:5} plots the percentage of correctly ranked nodes (for all ten 
trials) if we rank 
the nodes of the \texttt{IBM32} graph according to the modulus of $\widehat{C}{\bf 1}_d$ 
instead of $B^{-1}{\bf 1}_d$. Performing more iterations in Algorithm \ref{alg:ruvn} leads 
to a more accurate ranking, and the rankings obtained by independent trials vary less 
as the iterations number performed by Algorithm \ref{alg:ruvn} increases.

\begin{figure}
    \centering
    \includegraphics[width=0.481\linewidth]{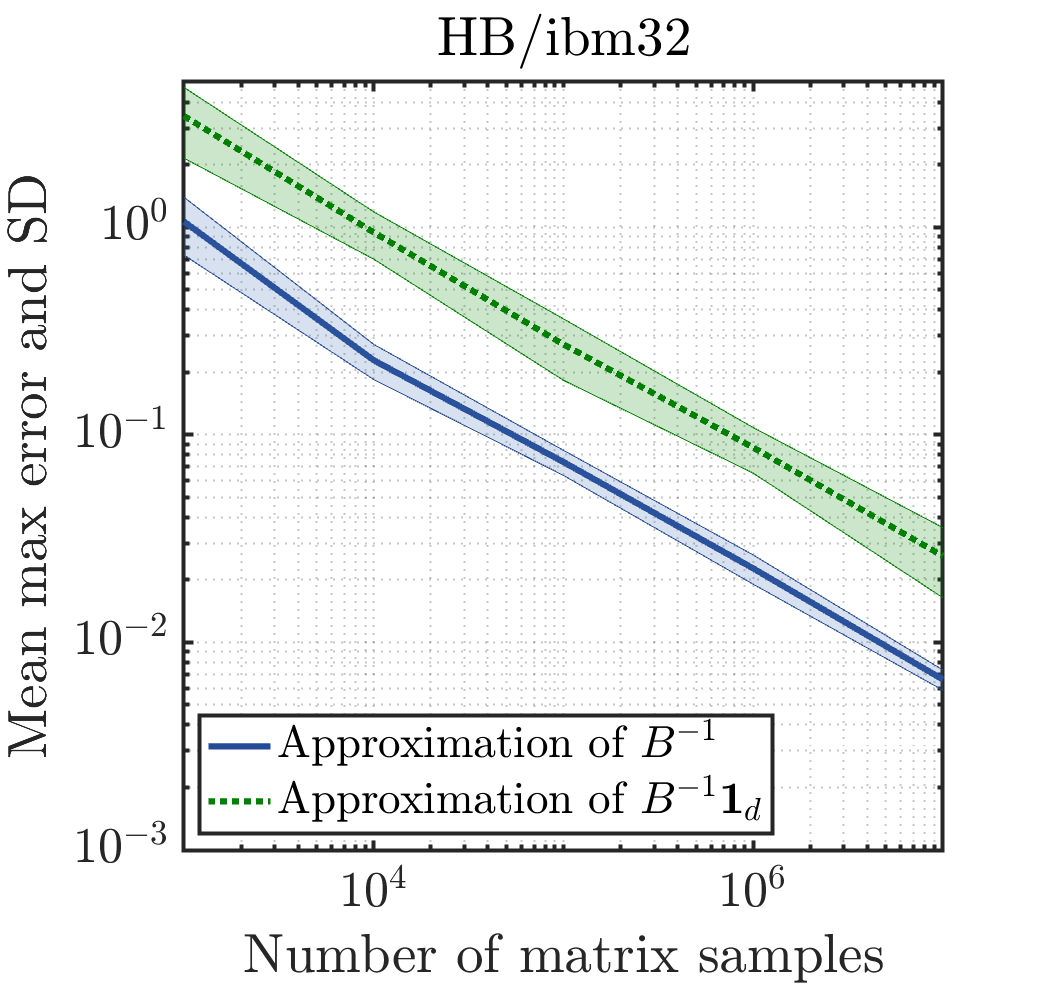}
    \includegraphics[width=0.43\linewidth]{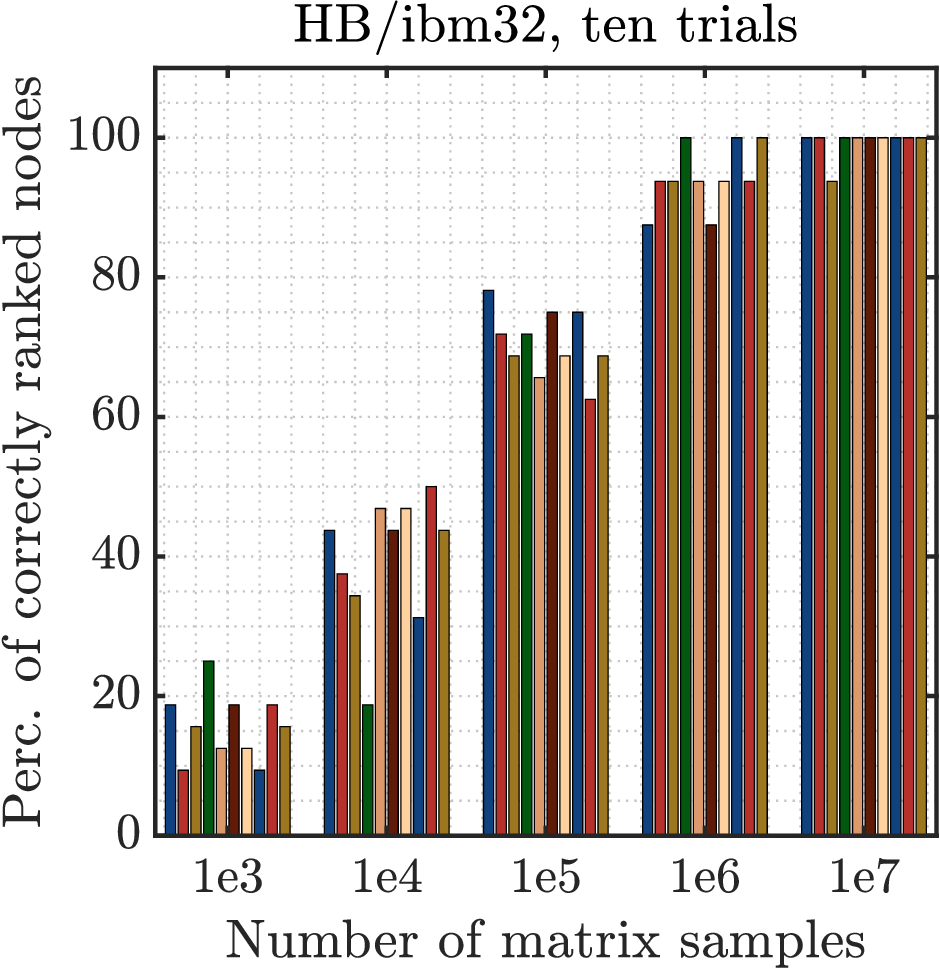}
    \vspace*{-0.1in}
    \caption{Left: maximum entry-wise error. Right: percentage of correctly ranked 
    nodes for all ten trials.}
    \label{fig:5}
\end{figure}

\subsection{Approximating a column of the matrix inverse $B^{-1}$}

\begin{figure}
    \centering
    \includegraphics[width=0.32\linewidth]{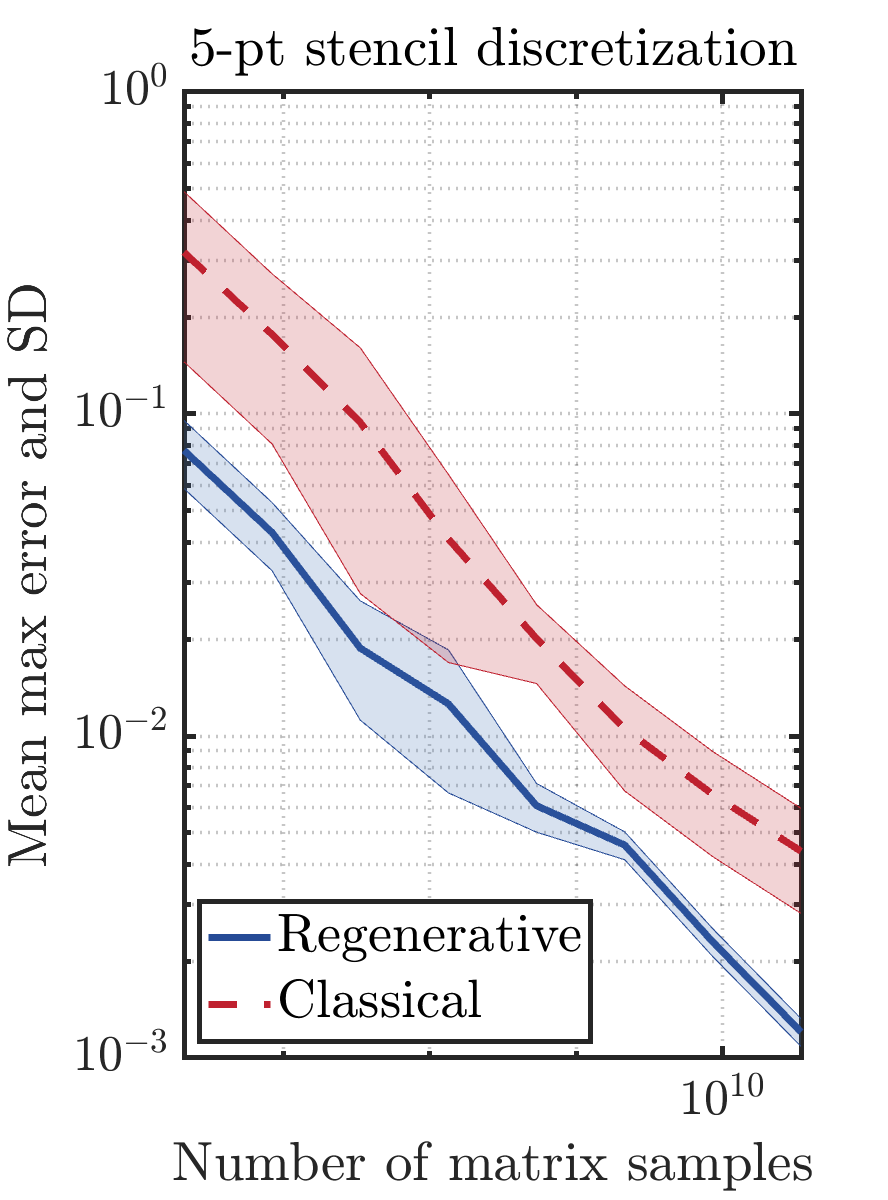}
    \includegraphics[width=0.32\linewidth]{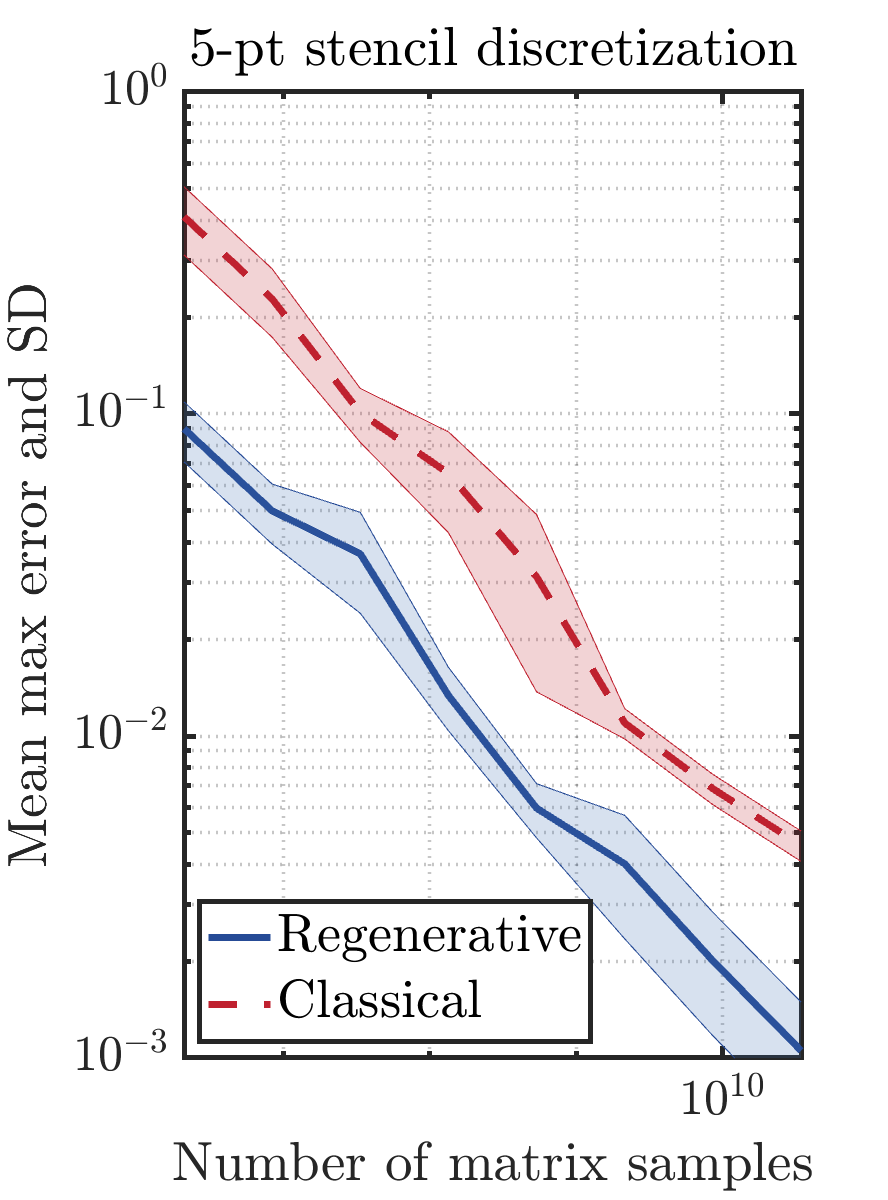}
    \includegraphics[width=0.32\linewidth]{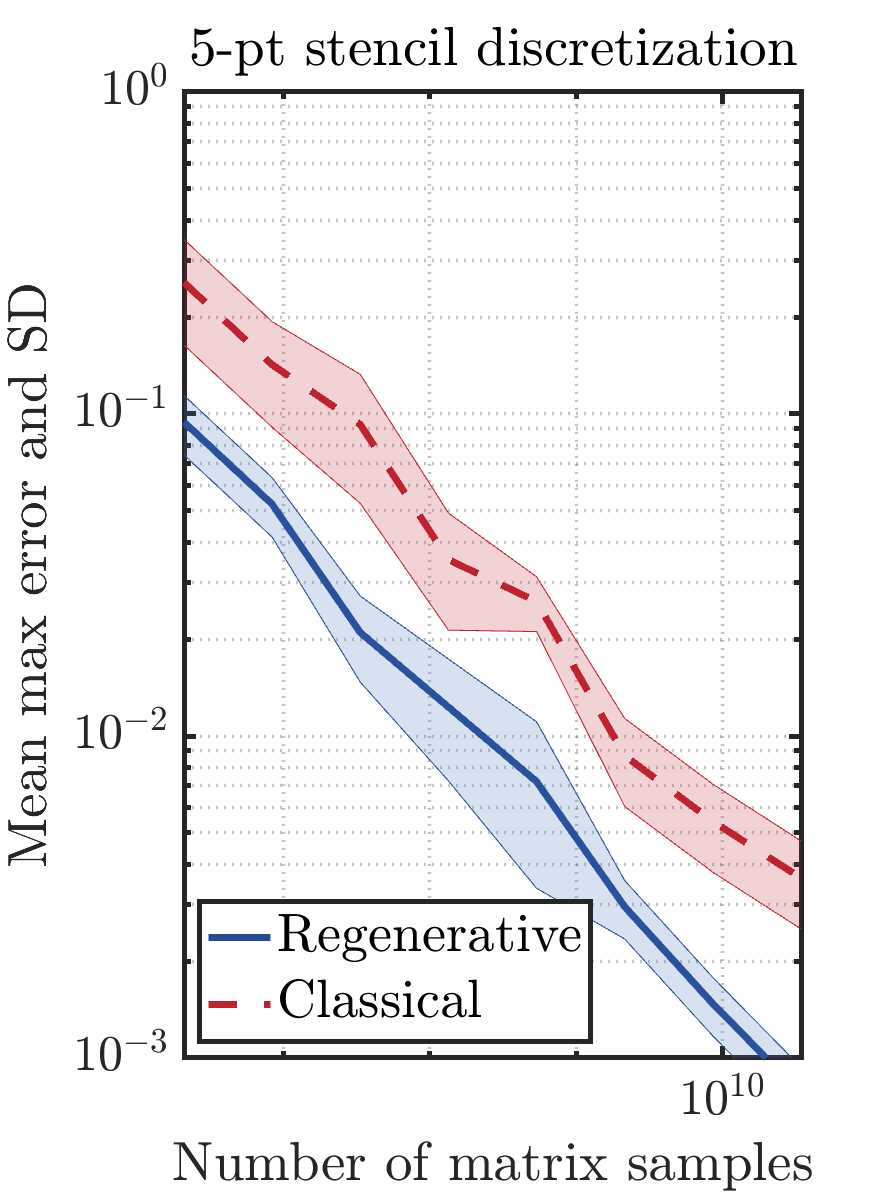}
    \caption{Maximum entry-wise error in the approximation of a column $n \in [d]$ of $B^{-1}$ achieved by classical Ulam-von Neumann and Algorithm \ref{alg:ruvn2} 
    for a 5-pt stencil discretization of size $1024$. Left: $n=1$. Center: $n=d/2$. Right: $n=d$.}
    \label{fig:col1}
\end{figure}

\begin{figure}
    \centering
    \includegraphics[width=0.32\linewidth]{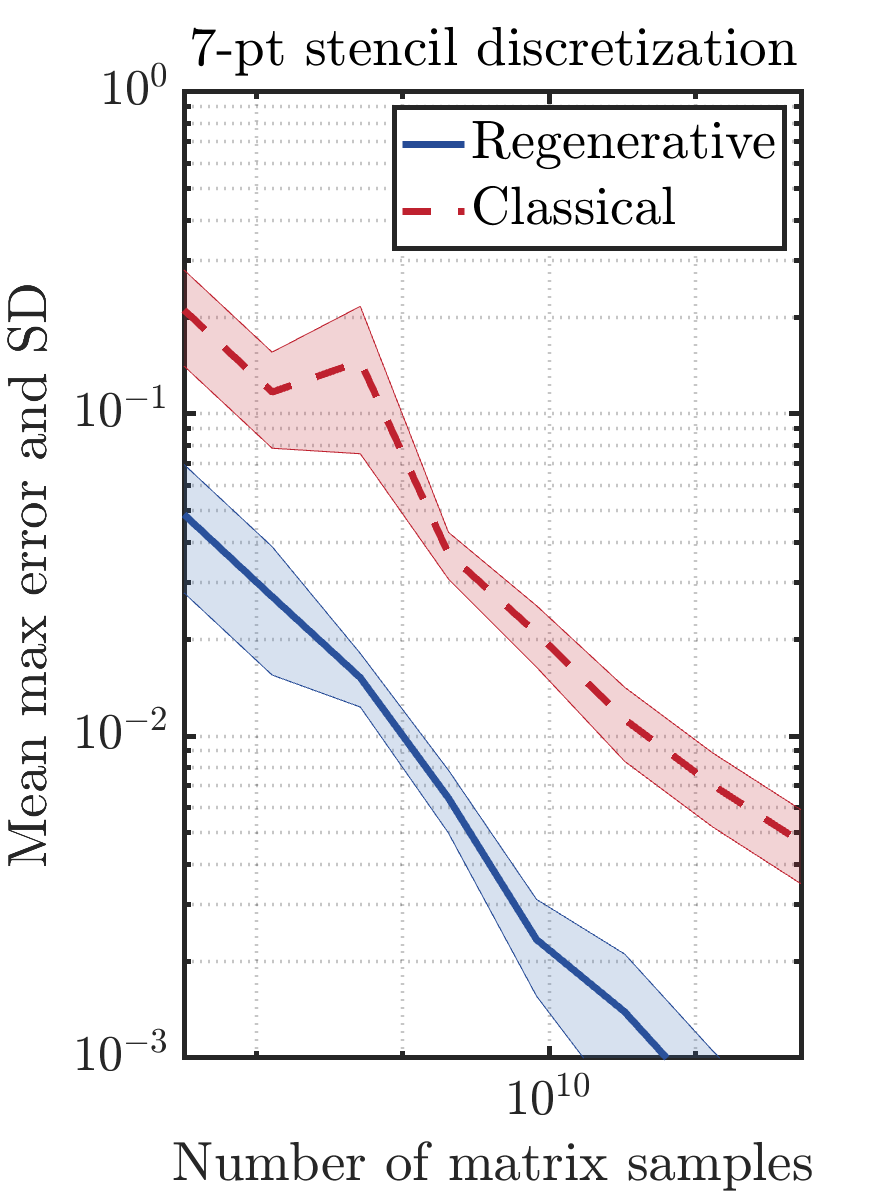}
    \includegraphics[width=0.32\linewidth]{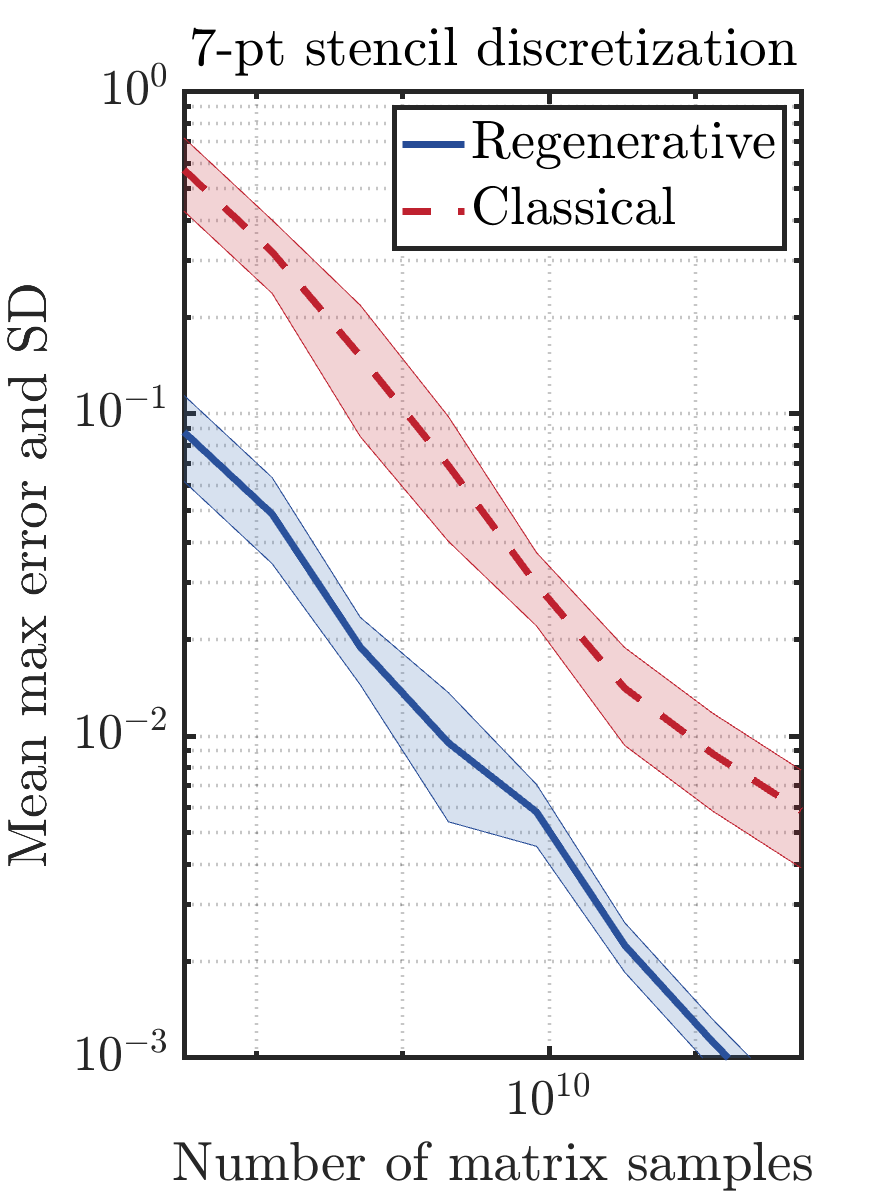}
    \includegraphics[width=0.32\linewidth]{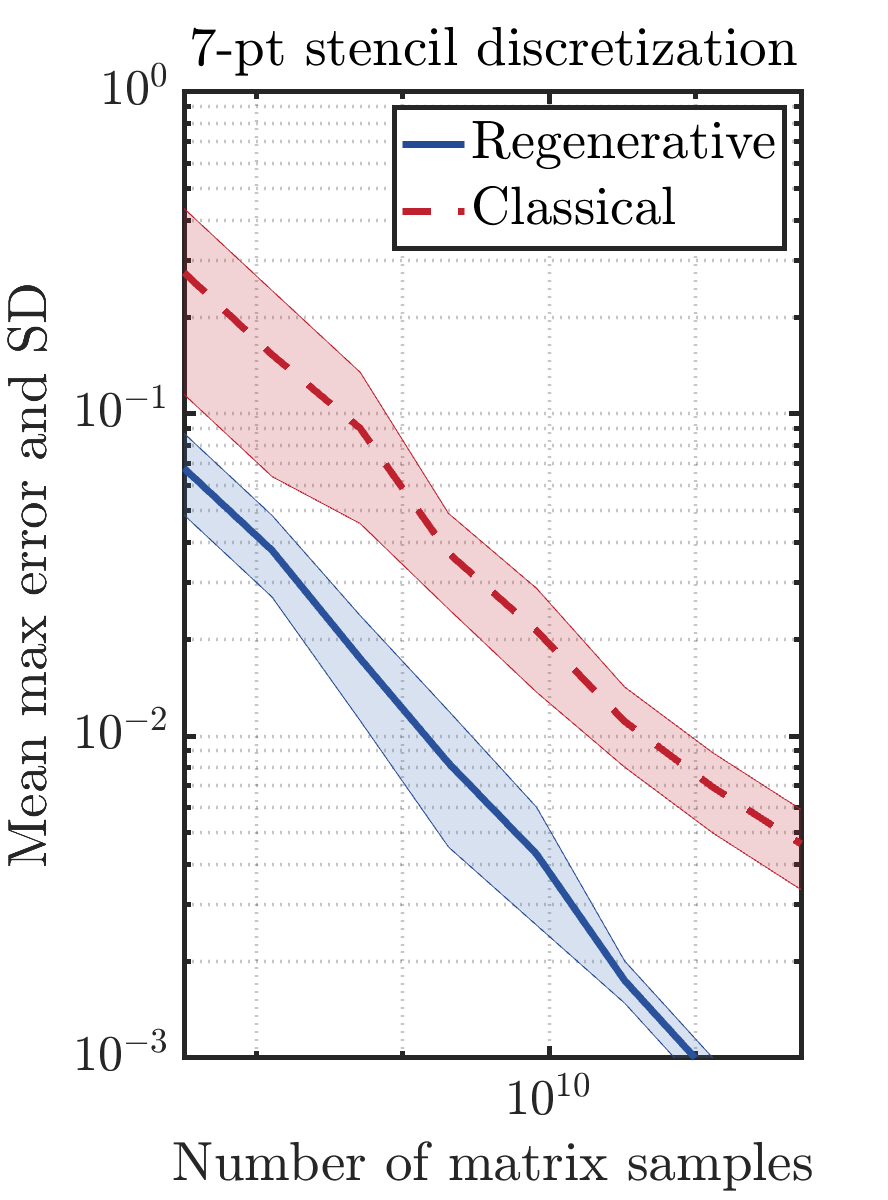}
    \caption{Maximum entry-wise error in the approximation of a column $n \in [d]$ of $B^{-1}$ achieved by classical Ulam-von Neumann and Algorithm \ref{alg:ruvn2} 
    for a 7-pt stencil discretization of size $4000$. Left: $n=1$. Center: $n=d/2$. Right: $n=d$.}
    \label{fig:col2}
\end{figure}

In this section we consider the approximation of an individual column of the matrix inverse $B^{-1}$ via both the classical Ulan-Von Neumann algorithm and the regenerative approach as outlined in Algorithm \ref{alg:ruvn2}. Such computations can arise, for example in personalized Katz centrality where we need to solve the system of linear equations $(I-\alpha A)x=e_n$ where $e_n\in \mathbb{R}^d$, $n \in [d]$, denotes the $n$-th column of the identity matrix of dimension $d$. The solution of this system is simply the $n$-th column of the matrix inverse $B^{-1}$ \cite{10.1007/978-3-319-78024-5_26}. 
Computing individual columns of $B^{-1}$ is a particularly suitable setting for Algorithm \ref{alg:ruvn} since each time the regenerative algorithm returns to state $n\in [d]$, the 
entire $n$-th column of $B^{-1}$ is updated as opposed to updating only a single entry via the 
classical Ulam-von Neumann algorithm. When the matrix $A$ is not explicitly known and the computation of a 
random entry $A_{i,j}$ is expensive, it is highly desirable to provide an approximation of $B^{-1}$ that leverages as few sampled entries $A_{i,j}$ as possible. For example, consider 
$A=FF^\top$ for some matrix $F\in \mathbb{R}^{d\times d}$. Then, computing an entry $A_{i,j}$ 
can incur a computational cost of $2d-1$ operations which is an order $O(d)$ greater than the computational cost of each iteration of the classical Ulam-von Neumann matrix inversion algorithm, e.g., see also the related discussion in Section 5.4 in \cite{doi:10.1137/130904867}. 

Figure \ref{fig:col1} plots the maximum entry-wise error of the approximation of a column $n \in [d]$ of $B^{-1}$ achieved by classical Ulam-von Neumann and Algorithm \ref{alg:ruvn2} for a 5-pt stencil discretization of dimension $d=1024$. We consider three different scenarios where $n \in \{1,d/2,d\}$, i.e., we approximate the first, middle, and last column of $B^{-1}$. The maximum entry-wise errors are plotted against the number of samples $A_{i,j}$ which is also equivalent to the total number of iterations by each approach. In all cases, Algorithm \ref{alg:ruvn} achieves a lower error due to exploiting the regenerative structure of the Markov chain. Figure \ref{fig:col2} plots the same quantities as in Figure \ref{fig:col1} for a 7-pt stencil discretization of dimension $d=4000$.

\section{Conclusion}
\label{sec:conclusions}

This paper presented a Markov chain Monte Carlo algorithm to approximate the inverse of a matrix with convergent Neumann series. The proposed algorithm recasts the classical problem into one of a stochastic fixed-point equation induced by the regenerative structure of the Markov chain. The  estimator associated with the regenerative algorithm does not require any truncation of the Neumann series.  
Moreover, only one parameter -the total number of simulated Markov transitions- is necessary, as opposed to the two parameters required in the Ulam-von Neumann algorithm. A probabilistic analysis of the proposed algorithm is conducted and a central limit theorem is obtained. Numerical experiments demonstrate that the proposed algorithm can achieve a more accurate approximation of the entries of the matrix inverse compared to classical Ulam-von Neumann algorithm. 

As part of our future work we plan to implement Algorithm \ref{alg:ruvn} on shared memory high-performance computing architectures such as Graphics Processing Units (GPUs), and compare its performance against classical Markov chain Monte Carlo approaches. For example, in contrast to the algorithm of Ulam-von Neumann which relies on scalar multiplications, the main computational cost of Algorithm \ref{alg:ruvn} stems from matrix-scalar multiplication of the form $\Lam=\Lam\times \frac{A_{i,j}}{P_{i,j}}$, an operation that can be parallelized via GPUs. Other interesting directions include the application of the regenerative approach in obtaining approximate inverse blocks in block-Jacobi preconditioners, and explore combinations of the work presented in this paper with the MCMC algorithm in \cite{guidotti2024fast}. Finally, we are planning to explore how different properties of the matrix $A$ affect the practical performance of the regenerative algorithm, and mechanisms to set the transition matrix $P$ so that the variance of the estimator is reduced.

\section*{Acknowledgments}

The authors would like to express their gratitude to the editor and the two anonymous reviewers for their meticulous reading and detailed comments which led to several improvements in our initial submission. We are indebted to one of the reviewers for their continuous effort and numerous suggestions to improve factual correctness and clarity of our submission. The authors would also like to thank Prof. Michele Benzi and Prof. Yuanzhe Xi for their discussions and comments regarding the work presented in this manuscript.


\section*{Appendix A: Proof of Theorem~\ref{thm:main_clt}} \label{appendixproof}

\begin{proof}[Proof of Theorem~\ref{thm:main_clt}]
Let $p_n$ and $q_n$ be two sequences of positive integers satisfying $q_n\ra \infty$, $q_n=o(p_n)$, $p_n =O(n^{1-\beta})$ for a fixed $\beta\in (0,1)$ and $q_n^{(2m-3)/(m-2)}=O(n)$ as $n\ra \infty$. This means that both $p_n$ and $q_n$ go to infinity but slower than $n$, moreover $q_n$ grows slower than $p_n$. Define $k_n = \left[\frac{n}{p_n+q_n}\right]$, with $[\cdot]$ denoting the Gauss bracket, and
\begin{align*}
C^n_j=& ((p_n+q_n)(j-1)+p+1,(p_n+q_n)j] \cap \Natural,\\ 
B^n_j = &((p_n+q_n)(j-1), (p_n+q_n)(j-1)+p_n]\cap \Natural,\\ 
V^n_j=& \sum_{i\in C^n_j} X_i,\\
U^n_j = &\sum_{i\in B^n_j} X_i,\\ 
Z_n= &\frac{1}{\sqrt{n}} (U^n_1+\ldots+U^n_{k_n}).
\end{align*}
Following the above definitions, the distance between any two blocks $B_j^n$ and $B_k^n$ 
$j\neq k$, is at least $q_n$, while $C_j^n$ contains the integers between $B^n_{j-1}$ and 
$B^n_{j}$. Let now
\begin{align*}
{\widetilde Z}_n=&\frac{1}{\sqrt{n}} \left({\widetilde U}^n_1+\ldots+{\widetilde U}^n_{k_n}\right),\\  Y_n =& \frac{1}{\sqrt{n}} \left({\widetilde V}^n_1+\ldots+{\widetilde V}^n_{k_n}\right),
\end{align*}
with ${\widetilde U}^n_i$ independent random variables following the same distribution as $U^n_i$, and ${\widetilde V}^n_i$ independent Gaussian variables with the same first two moments as $U^n_i$. For a fixed $t\in \Real^d$ in a compact set $\Omega$ containing the origin, define $f:\Real^d\ra \Complex, x\mapsto \exp(\sqrt{-1} (t \cdot x))$. Then, the difference between the characteristic functions of ${\widehat X}_n$ and the limit $X_\infty$ is a random variable distributed as $N(0, M^2)$ and 
can be expressed as
\begin{align*}
&\ex \left[f({\widehat X}_n)-f(X_\infty)\right] \\= &\ex\left[f({\widehat X}_n)-f(Z_n)+f(Z_n)-f({\widetilde Z}_n)+f({\widetilde Z}_n)-f(Y_n)+f(Y_n)-f(X_\infty )\right] \\ =& \underbrace{\ex[f({\widehat X}_n)-f(Z_n)]}_{I}+\underbrace{\ex[f(Z_n)-f({\widetilde Z}_n)]}_{II}+\underbrace{\ex[f({\widetilde Z}_n)-f(Y_n)]}_{III}+\underbrace{\ex[f(Y_n)-f(X_\infty )]}_{IV}.
\end{align*}
Furthermore, following the Taylor series expansion, we know that for any $x, y\in \Real^d$, $f(x)-f(y) = \nabla f(\xi)\cdot x-y$ for some $\xi\in\Real^d$ with $\nabla f(x) =  \exp(\sqrt{-1}(t \cdot x)) \sqrt{-1}t$. Hence, 
\begin{align}
\label{eqn:taylor}
\|f(x)-f(y)\| \le \|\nabla f(\xi)\|_2 \|y-x\|_2= \|t\|_2 \|y-x\|_2.
\end{align}

The process of estimating the above four quantities and showing that they converge to zero as $n\ra \infty$ (thus proving the central limit theorem) is known as the Bernstein block technique of the \emph{Lindeberg method}, e.g., see~\cite{petrov1995limit} and~\cite{rio1997lindeberg}. Terms $II$ and $III$ are known to be the \emph{main} terms while terms $I$ and $IV$ are often referred to as the \emph{auxiliary} terms. In the following, we show that each one of these four terms converges to zero as $n\ra \infty$. 

\subsubsection*{Calculation of the term $I$}
Following the definition of the function $f$ and inequality~\eqref{eqn:taylor}, we can write
\begin{align*}
|I|= & |\ex[f({\widehat X}_n)-f(Z_n)]|& & 
\\
\le& \|t||_2 \ex\left[\bigg\| \frac{1}{\sqrt{n}} (V^n_1+\ldots+V^n_{k_n})\bigg\|_2 \right]
\\ 
\le&  \frac{2\|t\|_2 }{\sqrt{n}} \left[\sum_{j=1}^{k_n}\sum_{i\in C_j^n} \bigg\|\sum_{\ell \ge i}\ex[X_i X_\ell]\bigg\|_2\right]^\frac12
\\
\le&
\frac{2\|t\|_2 }{\sqrt{n}} (k_n q_n +p_n)^\frac12 \|M^2\|_2.
\end{align*}
Moreover, from the assumptions made on $p_n$, $q_n$ and $k_n$, we know that 
\begin{equation*}
    \frac{k_n q_n +p_n}{n}\le \frac{k_n q_n +p_n}{k_n (q_n +p_n)}= \frac{k_n q_n }{k_n (q_n +p_n)}+\frac{p_n}{k_n (q_n +p_n)} \le \frac{ q_n }{q_n +p_n}+\frac{1}{k_n}.
\end{equation*}
Therefore, $|I|$ tends to zero as $n\ra \infty$.

\subsubsection*{Calculation of the term $II$}

The term $II$ is controlled by the weak dependency condition while the correlations are 
controlled by the Lipschitz constant and $\epsilon_r$. In particular, 
\begin{equation*}
    II= \ex\left[f(Z_n)-f\left({\widetilde Z}_n\right)\right] \le \sum_{j=1}^{k_n} \left|\ex\left[\Delta^n_j\right]\right|,
\end{equation*}
with
\begin{align*}
\Delta^n_j=&f(W_j^n+x^n_j)- f(W_j^n+{\widetilde x}^n_j),  \\x^n_j= &\frac{1}{\sqrt{n}} U^n_j,  \qquad \ \ \mathrm{and}\ \ {\widetilde x}^n_j= \frac{1}{\sqrt{n}} {\widetilde U}^n_j,\\ W^n_j=&\sum_{i<j}x^n_i +\sum_{i>j}{\widetilde x}^n_i\,.
\end{align*}
The exponential function form of $f$ then indicates that
\begin{align*}
\left|\ex\left[\Delta^n_j\right]\right|\le \Bigg|\cov \left(f\left(\sum_{i<j}x^n_i\right), f(x^n_j)\right)\Bigg|.
\end{align*}
Then, by the assumption that $\{X_n\}$ is a $({\epsilon_r}, \psi)$-weakly dependent sequence
(${r\in\bZ}$), we have $|\ex\left[\Delta^n_j\right]| \le C k_np_n \epsilon_{q_n}$. Thus, the condition of $\epsilon_r$, $p_n$ and $q_n$ leads to the desired result. 

\subsubsection*{Calculation of the term $III$}

The term $III$ can be written as 
\begin{align*}
III=&\ex[f({\widetilde Z}_n)-f(Y_n)]\le \sum_{j=1}^{k_n} \left|\ex\left[\widetilde \Delta^n_j\right]\right|
\qquad\text{with}
\\
{\widetilde \Delta}^n_j=&f({\widetilde W}_j^n+{\widetilde x}^n_j)- f({\widetilde W}_j^n+{\widehat x}^n_j), \qquad {\widehat x}^n_j= \frac{1}{\sqrt{n}} {\widetilde V}^n_j, \qquad {\widetilde W}^n_j=\sum_{i<j}{\widetilde x}^n_i +\sum_{i>j}{\widehat x}^n_i.
\end{align*}
Then, $\left|\ex\left[\widetilde \Delta^n_j\right]\right|= |\ex f({\widetilde x}^n_i)-\ex f({\widehat x}^n_i)|$. 
Due to Taylor expansion, there exists $C>0$, such that for all $x\in \Real^d$ and $t\in \Omega\subset \Real^d$, and any $\al$ such that $\beta(1+\delta/2)\le 1+\al/2$,

\begin{align*}
\bigg\|f(x) - \left[1+ i t\cdot x - \frac12 x^T (tt^T) x) \right]\bigg\|\le C \|x\|_{2+\alpha}^{2+\al}.
\end{align*}
Therefore, the moment condition has the term $III$ bounded by $Cn^{-1-\frac{\al}{2}} k_n^{1+\frac{\delta}{2}}$ by applying Lemma~\ref{lem:delta_moment} (shown below) with $m=k_n$ to each coordinate, 
and it goes to zero as $n\ra \infty$ as a result of the way $\al$ is selected.

\subsubsection*{Calculation of the term $IV$}
For the term $IV$, recall that the independent Gaussian variables ${\widetilde V}^n_i$ have the same first two moments as $U^n_i$, i.e. $\ex[{\widetilde V}_i]=0$, and $\ex[{\widetilde V}^n_i({\widetilde V}^n_i)^T]=\ex[U^n_i(U^n_i)^T]$, and they are independent of index $i$. Then, $\ex[f(Y_n)-f(X_\infty )]\ra 0$ as $n\ra \infty$, which can be verified through direct calculations on the second moments of $Y_n$ since both $Y_n$ and $X_\infty$ are Gaussian.
\end{proof}

\begin{lemma}
\label{lem:delta_moment}
For i.i.d. random variables $X_i\in \Real$ with finite $2+\delta$ moment for some $\delta>0$, there exists $C>0$ independent of $X$, such that, for any integer $m>1$ and any $0<\al <\delta$, we have, 
\begin{align}
\label{eqn:delta_moment}
\ex\left[\bigg|\sum_{i=1}^m X_i\bigg|^{2+\al}\right]\le Cm^{\frac{2+\delta}{2}}
 (\ex[|X_1|^{2+\delta}]).
\end{align}
\end{lemma}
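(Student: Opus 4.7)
The plan is to combine a classical moment inequality for sums of i.i.d.\ centered random variables with Jensen's inequality, turning the $(2+\al)$-th moment of the full sum into a bound controlled by the $(2+\delta)$-th moment of a single summand. I work under the implicit centering convention inherited from Theorem~\ref{thm:main_clt}: without $\ex[X_1]=0$, the sum concentrates around $n\ex[X_1]$ and $\ex[\|S_n\|^{2+\al}]$ grows like $n^{2+\al}$, violating the claimed bound, so centering is unavoidable.

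First I would invoke the vector-valued Marcinkiewicz--Zygmund (equivalently, Rosenthal/Burkholder--Davis--Gundy) inequality: for i.i.d.\ centered random vectors and any $p \ge 2$ there is a constant $C_p$ with
\begin{align*}
\ex\left[\bigg\|\sum_{i=1}^n X_i\bigg\|^p\right] \le C_p\, \ex\left[\bigg(\sum_{i=1}^n \|X_i\|^2\bigg)^{p/2}\right].
\end{align*}
Taking $p = 2+\delta$ and applying the power-mean inequality $(\sum a_i^2)^{p/2} \le n^{p/2-1} \sum a_i^p$ to the right-hand side, followed by the i.i.d.\ assumption, gives
\begin{align*}
\ex\left[\bigg\|\sum_{i=1}^n X_i\bigg\|^{2+\delta}\right] \le C_{2+\delta}\, n^{(2+\delta)/2}\, \ex[\|X_1\|^{2+\delta}].
\end{align*}

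Next I would use Jensen's inequality with the concave map $x \mapsto x^{(2+\al)/(2+\delta)}$ (well-defined and concave because $\al < \delta$) to descend from the $(2+\delta)$-moment to the $(2+\al)$-moment of the sum:
\begin{align*}
\ex\left[\bigg\|\sum_{i=1}^n X_i\bigg\|^{2+\al}\right] \le \left(\ex\left[\bigg\|\sum_{i=1}^n X_i\bigg\|^{2+\delta}\right]\right)^{(2+\al)/(2+\delta)}.
\end{align*}
Chaining the two estimates, dominating $n^{(2+\al)/2} \le n^{(2+\delta)/2}$ for $n \ge 1$, and absorbing the sub-linear dependence on $\ex[\|X_1\|^{2+\delta}]$ via the crude bound $y^{(2+\al)/(2+\delta)} \le 1+y$ for $y\ge 0$ yields \eqref{eqn:delta_moment}.

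The main obstacle I anticipate is not conceptual but presentational: justifying the vector-valued Marcinkiewicz--Zygmund inequality cleanly. It follows either from Burkholder--Davis--Gundy applied to the martingale $S_k = \sum_{i\le k} X_i$ in a Hilbert space, or more elementarily by symmetrization plus a Khintchine-type argument on each coordinate; the paper could simply cite a standard reference. A smaller subtlety is that the stated bound is not tight — Rosenthal actually gives $n^{(2+\al)/2}(\ex\|X_1\|^{2+\delta})^{(2+\al)/(2+\delta)}$ — but the looser form in \eqref{eqn:delta_moment} suffices for the proof of Theorem~\ref{thm:main_clt} and simplifies the downstream bookkeeping on the term $III$.
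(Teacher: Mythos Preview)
Your proposal is correct but takes a genuinely different route from the paper. The paper proceeds hands-on: it first treats symmetric $X_i$, writes $\|S_n\|^{2+\al}=S_n^2\,\|S_n\|^{\al}$, uses symmetry and independence to kill the cross terms so that the expectation equals $n\,\ex[X_1^2\|S_n\|^{\al}]$, then applies H\"older to obtain the self-referential bound
\[
\ex\|S_n\|^{2+\al}\le n\,(\ex|X_1|^{2+\delta})^{2/(2+\delta)}\,(\ex\|S_n\|^{2+\al})^{\delta/(2+\delta)},
\]
which it solves for $\ex\|S_n\|^{2+\al}$; the general case is reduced to the symmetric one via Rademacher symmetrization and Khintchine's inequality. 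You instead invoke Marcinkiewicz--Zygmund/Rosenthal as a black box and descend from the $(2+\delta)$-moment to the $(2+\al)$-moment by Lyapunov/Jensen. Your path is shorter and cleaner at the cost of importing more machinery; the paper's is self-contained but its bootstrap step (replacing $\ex\|S_n\|^{\al(2+\delta)/\delta}$ by $\ex\|S_n\|^{2+\al}$) carries its own minor subtlety. Amusingly, the ``symmetrization plus Khintchine'' route you sketch in your last paragraph as a way to justify Marcinkiewicz--Zygmund is precisely what the paper does.

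One small wrinkle: your final step $y^{(2+\al)/(2+\delta)}\le 1+y$ produces $C'n^{(2+\delta)/2}(1+\ex\|X_1\|^{2+\delta})$ rather than $Cn^{(2+\delta)/2}\ex\|X_1\|^{2+\delta}$, so strictly speaking the constant is not independent of $X$ as stated. The paper's argument has an analogous looseness, and for the intended application to term~$III$ in Theorem~\ref{thm:main_clt} only the dependence on $n$ and $k_n$ matters, so this does not affect the downstream conclusion.
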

\begin{proof}
First, consider the case where $X_i$ are symmetric. Then, for any $\al\le \delta$,
\begin{align*}
\ex\left[\bigg|\sum_{i=1}^m  X_i\bigg|^{2+\al}\right]=&\ex\left[\left(\sum_{i=1}^m  X_i\right)^2\bigg|\sum_{k=1}^m  X_k\bigg|^{\al}\right]
\\=&\sum_{i=1}^m \ex\left[X_i^2\bigg|\sum_{j=1}^m  X_j\bigg|^{\al}\right]+ \sum_{i,j=1}^m \ex\left[X_iX_j \bigg|\sum_{i=1}^m X_i\bigg|^{\al}\right]\\=&m\ex\left[X_1^2\bigg|\sum_{j=1}^m  X_j\bigg|^{\al}\right],
\end{align*}
where the last step follows from the symmetry and i.i.d. assumptions. For $\al<\delta$, H\"older's inequality gives us,
\begin{align*}
\ex\left[X_1^2\bigg|\sum_{j=1}^m  X_j\bigg|^{\al}\right]\le &(\ex[|X_1|^{2+\delta}])^{\frac{2}{2+\delta}}
\left(\ex\left[\bigg|\sum_{j=1}^m  X_j\bigg|^{\frac{\al(2+\delta)}{\delta}}\right]\right)^{\frac{\delta}{2+\delta}}\\
\le & (\ex[|X_1|^{2+\delta}])^{\frac{2}{2+\delta}}
\left(\ex\left[\bigg|\sum_{j=1}^m  X_j\bigg|^{2+\alpha}\right]\right)^{\frac{\delta}{2+\delta}}, 
\end{align*}
because $\frac{\al(2+\delta)}{\delta}< 2+\al$. 
Therefore, we have, 
\begin{align*}
\ex\left[\bigg|\sum_{i=1}^m  X_i\bigg|^{2+\al}\right]
&\le 
m (\ex[|X_1|^{2+\delta}])^{\frac{2}{2+\delta}}
\left(\ex\left[\bigg|\sum_{j=1}^m  X_j\bigg|^{2+\alpha}\right]\right)^{\frac{\delta}{2+\delta}},
\end{align*}
So,
\begin{align*}
\left(\ex\left[\bigg|\sum_{j=1}^m  X_j\bigg|^{2+\alpha}\right]\right)^{\frac{2}{2+\delta}}
&\le 
m (\ex[|X_1|^{2+\delta}])^{\frac{2}{2+\delta}}.
\end{align*}
Thus,
\begin{align}
\label{eqn:delta_moment_sym}
\ex\left[\bigg|\sum_{j=1}^m  X_j\bigg|^{2+\alpha}\right]
&\le 
m^{\frac{2+\delta}{2}} (\ex[|X_1|^{2+\delta}])\,.
\end{align}
For a general sequence $X_i$, applying a Hoffman-J\o{}rgensen-type inequality, see, e.g. Lemma 1.2.6 in~\cite{de1999decoupling}, implies constants $C_1, C_2>0$, such that 
\[C_1\ex\left[\bigg|\sum_{i=1}^m X_i\bigg|^{2+\delta}\right]\le \ex\left[\bigg|\sum_{i=1}^m  \epsilon_i X_i\bigg|^{2+\delta}\right]\le C_2\ex\left[\bigg|\sum_{i=1}^m  X_i\bigg|^{2+\delta}\right],\] where $\epsilon_i,\  i=1,2,\ldots,n$ is a Rademacher sequence, i.e., i.i.d. random variables taking either $1$ or $-1$ with equal probabilities. Thus,   \eqref{eqn:delta_moment} is obtained through applying inequality~\eqref{eqn:delta_moment_sym} to the Rademacher average $\sum_{i=1}^m  \epsilon_i X_i$ because the random variables $\epsilon_i X_i$ are symmetric.
\end{proof}

\section*{Appendix B: Computation of a single column of the matrix inverse} \label{appendixa}

\begin{algorithm}
    \setcounter{AlgoLine}{0}
    \SetKwInOut{Input}{Input}
    \SetKwInOut{Output}{Output}
    \Input{$A\in \mathbb{R}^{d\times d},\ P\in \mathbb{R}^{d\times d}$, scalar $N\in \mathbb{N}$, and sought column index $n\in [d]$. Also define and set the column vectors 
    $\Lam\in \mathbb{R}^{d},\ \Sigma\in \mathbb{R}^{d},\ \Gamma\in \mathbb{N}^{d}$ equal to zero.     }
    {\bf Initialization:} Initialize the Markov chain at some random state $i\in [d]$. \\
    \While{$\min_q \Gamma_q< N$}{
  \  \emph{Update the $i$-th row of $\Lam$ if $\Lam_{i}=0$}:
    $\Lam_{i}=\Lam_{i}+\ONE_{[\Lam_{i}=0]}$;
    \\
    \emph{Sample next state $j$ of Markov Chain using probability vector $P_{i,\cdot}$}
    \\
    \emph{Update $\Lam$:}
    $\Lam=\Lam\times \frac{A_{i,j}}{P_{i,j}}$;
    \\
    \emph{If $n\equiv j \rightarrow$ Update $\Gamma$:} 
    $\Gamma_{k}=\Gamma_{k}+1$ if $\Lam_{k}\neq 0,\ k=1,2,\ldots,d$;
    \\
    \emph{If $n \equiv j \rightarrow$ Update $\Sigma$:} 
    $\Sigma=\Sigma+ \Lam$;
    \\
    \emph{If $n \equiv j \rightarrow$ Reset $\Lam$: }
    $\Lam=0$; 
    \\
    \emph{Over-write $i\leftarrow j$.}
    }
    \Output{\begin{align*}
\widehat{C}_{i} =\left\{ \begin{array}{cc} \dfrac{1}{1-\dfrac{\Sigma_{n}}{\Gamma_{n}}} & \mathrm{if}\ \ i=n\\\\ \dfrac{\Sigma_{i}}{\Gamma_{i}} \dfrac{1}{1-\dfrac{\Sigma_{n}}{\Gamma_{n}}} & \mathrm{if}\ \  i\neq n.
\end{array} \right.
\end{align*}
}
 \caption{Regenerative Ulam-von Neumann Algorithm for the computation of a single column of the matrix inverse}\label{alg:ruvn2}
\end{algorithm}

Algorithm \ref{alg:ruvn} leverages a single Markov chain where each time 
the chain transitions from state $i\in[d]$ to state $j\in[d]$, the algorithm updates the 
$j$-th columns of the matrices $\Gamma$ and $\Sigma$, and sets the $j$-th column of $\Lam$ to 
zero. Let now $n\in[d]$ denote the column index of the matrix inverse $B^{-1}$ that we sought
to compute and consider a variant of Algorithm \ref{alg:ruvn} such that the matrices $\Gamma$, 
$\Sigma$, and $\Lam$, are updated only when $j\equiv n$. Thus, only the $n$-th columns of the 
matrices $\Gamma$, $\Sigma$, and $\Lam$ are modified, with the rest of the $d-1$ columns being identically zero. As we show next, the above variant computes the same approximation 
$\widehat{C}_{i,n}$ as Algorithm \ref{alg:ruvn}.

To verify the above, it suffices to show that the computation of the $n$-th 
column of the matrix $\widehat{C}$ does not exploit any entries outside the $n$-th column of 
the matrices $\Gamma$, $\Sigma$, and $\Lam$. A careful look at the output formula of Algorithm 
\ref{alg:ruvn} reveals that $\widehat{C}_{i,n}$ depends solely on the scalars $\Sigma_{n,n},\ \Gamma_{n,n}$ (when $i\equiv n$) and the scalars $\Sigma_{i,n},\ \Gamma_{i,n},\ \Sigma_{n,n},\ \Gamma_{n,n}$ (when $i\neq n$). 
Thus, the computation of each $\widehat{C}_{i,n}$ involves only the $n$-th column of the matrices $\Sigma$ and $\Gamma$. Moreover, the scalars $\Sigma_{i,n}$ and $\Gamma_{i,n}$ 
involve the matrix $\Lam$ only through the updates $\Gamma_{i,n} = \Gamma_{i,n}+1$ if 
$\Lam_{i,n}\neq0$ and $\Sigma_{i,n} = \Sigma_{i,n}+\Lam_{i,n}$. Therefore, we only need 
to access the $n$-th column of the matrix $\Lambda$ as well.

Algorithm \ref{alg:ruvn2} summarizes the special case where Algorithm 
\ref{alg:ruvn} is used to compute the $n$-th single column of the matrix inverse $B^{-1}$. 
For the sake of clarity, we have chosen to retain the same symbols as in Algorithm 
\ref{alg:ruvn} even though the variables $\Lam,\ \Sigma$, and $\Gamma$ are now vectors. 
In a nutshell, Algorithm \ref{alg:ruvn2} proceeds identically to Algorithm \ref{alg:ruvn} 
except that $\Sigma$ and $\Gamma$ are now updated only when $n\equiv j$, i.e., we still 
sample the next state according to $P_{i,:}$ but we ignore any update of the variables 
$\Sigma$ and $\Gamma$ unless $n\equiv j$. Similarly, $\Lam$ is still modified 
as $\Lam_{i}=\Lam_{i}+\ONE_{[\Lam_{i}=0]}$ and updated $\Lam=\Lam\times \frac{A_{i,j}}{P_{i,j}}$ 
but it is reset to $\Lam=0$ only when $n\equiv j$. Algorithm \ref{alg:ruvn2} requires less 
storage compared to Algorithm \ref{alg:ruvn} since the $d\times d$ matrix variables become 
vectors of length $d$.

\bibliographystyle{siamplain}

\end{document}